\DeclareMathOperator\Id{Id}
\newcommand\eq[1]{\begin{align}{#1}\end{align}}
\newcommand\eqn[1]{\begin{align*}{#1}\end{align*}}
\newcommand\grad[0]{\nabla}
\let\d\relax
\newcommand\d[0]{\partial}
\let\div\relax
\DeclareMathOperator\div{div}
\DeclareMathOperator\curl{curl}
\DeclareMathOperator\dist{dist}
\DeclareMathOperator\im{Im}
\newcommand\Td[0]{{\mathbb T^3}}
\newcommand\Rd[0]{{\mathbb R^3}}
\newcommand\Zd[0]{{\mathbb Z^3}}
\theoremstyle{plain}
\newtheorem{thm}{Theorem}[section]
\newtheorem{lem}[thm]{Lemma}
\newtheorem{cor}[thm]{Corollary}
\newtheorem{proposition}[thm]{Proposition}
\theoremstyle{definition}
\newtheorem{define}[thm]{Definition}
\theoremstyle{remark}
\newtheorem{remark}[thm]{Remark}
\renewcommand*\env@matrix[1][\arraystretch]{%
  \edef\arraystretch{#1}%
  \hskip -\arraycolsep
  \let\@ifnextchar\new@ifnextchar
  \array{*\c@MaxMatrixCols c}}
\definecolor{bluee}{HTML}{4682B4}
\title{Arbitrary norm growth in the 3D Navier--Stokes equations}
\author{Stan Palasek}
\address{Department of Mathematics, Princeton University \& Institute for Advanced Study, 1 Einstein Dr., Princeton, NJ 08540}
\email{palasek@ias.edu}
\begin{document}

\begin{abstract}
    We construct a family of smooth initial data for the Navier--Stokes equations, bounded in $BMO^{-1}(\mathbb T^3)$, that gives rise to arbitrarily large global solutions. As a consequence, we rule out various hypothetical a~priori estimates for strong solutions in terms of a critical norm of the initial data. To our knowledge, this is the first example of unbounded norm growth in the well-posed setting. The solutions exhibit an inverse cascade across an unbounded number of modes, with growth resulting from repeated squaring by the quadratic nonlinearity. This mechanism relies on largeness of the data in $B^{-1}_{\infty,\infty}$ and is fundamentally distinct from instantaneous norm inflation and ill-posedness phenomena. 
\end{abstract}

\maketitle

\section{Introduction}

Consider the incompressible Navier--Stokes equations
\begin{equation}\begin{aligned}\label{nse}
\d_t u-\nu\Delta u+u\cdot\grad u+\grad p=0\\
\div u=0\\
u(x,0)=u^0(x)
\end{aligned}\end{equation}
on the three-dimensional torus $\Td=(\mathbb R/2\pi\mathbb Z)^3$, equipped with divergence-free initial data $u^0\in C^\infty(\Td;\Rd)$. By a suitable rescaling of $t$, $u$, $u^0$, and $p$, we may normalize the viscosity to $\nu=1$. To avoid technicalities, we fix the discussion on the Cauchy problem for classical solutions, i.e., smooth vector fields $u(x,t)$ on $\Td\times[0,T)$ for some $T>0$. It has been known since the seminal work of Leray \cite{leray1934mouvement} that for any such initial data, there exists a unique solution for at least a time $T>0$ depending on, for instance, $\|u^0\|_{L^\infty(\Td)}$. Furthermore, there is a maximal $T_*\in(0,\infty]$ such that if $T_*<\infty$, then $u$ cannot be continued smoothly to any $T_*+\epsilon$. A fundamental question in the theory of the Navier--Stokes equations is whether $T_*=\infty$ in all cases.

The problem of global well-posedness as posed above is soft in the sense that it is not overtly connected to any quantitative properties of $u$. It is conceivable, for instance, that there is a family of data bounded in a space $X$ that becomes arbitrarily large by some measure, but blow-up is always averted by some qualitative mechanism. This scenario was ruled out by Tao \cite{tao-quantitative-formulation-navier} for the case when both the initial data and subsequent growth are measured in $H^1(\Td)$. More precisely, he shows that global regularity of \eqref{nse} is equivalent to the existence of a non-decreasing $f:[0,\infty)\to[0,\infty)$ such that
\eq{\label{quantitative-formulation-1}
\|u(T)\|_{X}\leq f(\|u^0\|_X)\tag{E1}
}
holds for all $T\in(0,1)$ and all classical solutions on $\Td\times[0,T]$ with data in $X=H^1$. In \cite{tao-quantitative-formulation-navier} he conjectures that the same is true when the subcritical norm $H^1$ is replaced by a critical norm, i.e., a norm which, when regarded on $\Rd$, is invariant under the scaling symmetry of \eqref{nse}:
\eqn{
u(x,t)\mapsto \lambda u(\lambda x,\lambda^2t),\qquad p(x,t)\mapsto\lambda^2 p(\lambda x,\lambda^2t)
}
for any $\lambda>0$.
  
Alternatively, one can hypothesize a~priori bounds of parabolic smoothing type, namely
\eq{\label{quantitative-formulation-2}
\sup_{t\in(0,T)}t^{\frac12(1+n)}\|\grad^nu(t)\|_{L^\infty(\Td)}\leq f_n(\|u^0\|_{X})\tag{E2}
}
for $n=0,1,2\ldots$. The equivalence of \eqref{quantitative-formulation-2} (for some particular $T$, $n$, and $f_n$, say) with the global regularity of \eqref{nse} was proved for the critical spaces $X=\dot H^{\frac12}(\Rd)$ by Rusin and \v Sver\'ak \cite{rusin-sverak} and $X=L^3(\Rd)$ by Jia and \v Sver\'ak \cite{jia-sverak-minimal}.\footnote{We remark that the results in \cite{rusin-sverak} and \cite{jia-sverak-minimal} are stated in $\mathbb R^3$ and the proofs make some use of the scaling symmetry; nonetheless, we expect they can be straightforwardly transferred to the periodic setting. We likewise expect that the main results in the present work can be extended to decaying solutions in $\Rd$ with some technical modifications.} It is almost certainly the case that the techniques in \cite{rusin-sverak} and \cite{jia-sverak-minimal} can be used to prove the critical analogue of Tao's result for \eqref{quantitative-formulation-1} with $X=\dot H^{\frac12}(\Td)$ and $L^3(\Td)$.

These results strongly suggest that global well-posedness is broadly equivalent to quantitative a~priori control at sufficiently high regularity (e.g., enough to support local well-posedness). A natural question is whether such control (in the form of \eqref{quantitative-formulation-1}, \eqref{quantitative-formulation-2}, or otherwise) persists in borderline critical spaces, and whether its equivalence to global regularity survives in that regime.\footnote{The arguments in \cite{rusin-sverak} and \cite{jia-sverak-minimal} would break down, for instance, in $L^{3,\infty}$ in the event that local well-posedness fails for large data. Indeed, the proof is by contradiction, considering a bounded family of data that exhibits growth and passing to a weak limit along a subsequence. The weak limit needs to have a local solution to contradict extendability to a global solution.} To date there has been little progress in this direction; failure of estimates like \eqref{quantitative-formulation-1} has been shown only in very weak settings where the Navier--Stokes equations are known to be severely ill-posed (e.g., $u^0\in B^{-1}_{\infty,\infty}$; see Section~\ref{previous-work-section} for a survey).

The primary purpose of this work is to show that \eqref{quantitative-formulation-1} and \eqref{quantitative-formulation-2} can dramatically fail even for choices of $X$ where there is local well-posedness and small data global well-posedness, and even from smooth initial data. We focus our discussion on\footnote{The data we construct lie in the stronger space $B^{-1}_{\infty,1}$. However, it is known that this norm is poorly adapted to the Navier--Stokes (see \cite{wang2015ill}) so we do not emphasize this point.} the well-known Koch--Tataru spaces $BMO^{-1}$ and $VMO^{-1}$, originally considered in \cite{koch2001well}. We find that all reasonable a~priori estimates in terms of these norms of the data are false.

The statements on failure of a~priori estimates will be straightforward consequences of the following theorem:

\begin{thm}\label{second-theorem}
Let $\Theta_*\in\Rd$ and $\eta_*\in\Zd$ be any non-zero vectors satisfying $\Theta_*\cdot\eta_*=0$. For any $\epsilon_*>0$ and $n_{max}\in\mathbb N$, there exists divergence-free initial data $u^0\in C^\infty(\Td)$ with
    \eqn{
    \|u^0\|_{B^{-1}_{\infty,1}}\leq 10^5
    }
    and such that the corresponding strong solution of \eqref{nse} is global and satisfies
    \eqn{
    u(x,t)=\Theta_*\sin(x\cdot\eta_*)\exp(-|\eta_*|^2t)+E
    }
    where
    \eqn{
    \|\grad^nE(t)\|_{L^\infty(\Td)}\leq \epsilon_*\qquad\forall t\in\big[\frac12|\eta_*|^{-2},2|\eta_*|^{-2}\big]
    }
    for $n=0,1,2,\ldots,n_{max}$.
\end{thm}

Let us emphasize that the strength of the theorem comes from taking $|\Theta_*|$ arbitrarily large, as this is the size the solution grows to when $t\sim |\eta_*|^{-2}$. Meanwhile, $\epsilon_*$, the size of the error, can independently be made arbitrarily small.

\begin{remark}
    We state the uniform bound on $u^0$ in terms of the critical $B_{\infty,1}^{-1}$ norm; this is stronger than boundedness in the well-known Koch--Tataru class $BMO^{-1}$, recalling the inclusions \eqref{besov-embedding} and \eqref{besov-bmo-inclusion}. Further, recall that $BMO^{-1}\cap C^\infty\subset VMO^{-1}$, a space from which solutions enjoy local well-posedness as shown in \cite{koch2001well}.
    
    We also point out that the upper bound $10^5$ is far from optimal, as we did not make significant effort to optimize the constants in the construction.
\end{remark}

\begin{remark}
    Theorem~\ref{second-theorem} and Corollary~\ref{a-priori-estimate-corollary} likely continue to hold for data that is both bounded in $B^{-1}_{\infty,1}$ \emph{and} small in $B^{-1}_{\infty,\infty}$ by combining the construction below with ideas from \cite{bourgain2008ill}. We leave this extension to future work.
\end{remark}

\begin{remark}
    The conclusion of Theorem~\ref{second-theorem} can be interpreted as the statement that data can be chosen from a ball such that on a particular time scale, the solution $u$ approximates a prescribed global solution of \eqref{nse}. This can almost certainly be strengthened to prescribe that $u$ scatters to \emph{any} global mild solution, but we do not pursue this generalization.
\end{remark}

From Theorem~\ref{second-theorem}, we can easily infer the failure of a variety of natural a~priori estimates, including those of the form \eqref{quantitative-formulation-1} and \eqref{quantitative-formulation-2}. In order to state the strongest possible result, we define the following restrictive class of initial data:
\eqn{
\mathscr{X}_\infty=\big\{u^0\in C_{df}^\infty(\Td) \text{ giving rise to a global strong solution $u(x,t)$ of \eqref{nse}}\big\},
}
where $C_{df}^\infty$ denotes the smooth divergence-free vector fields on $\Td$.

\begin{cor}\label{a-priori-estimate-corollary}
    Consider any $T>0$ and non-decreasing function $f:[0,\infty)\to[0,\infty)$. The following hypothetical a~priori estimates all fail to hold in general.
    \begin{enumerate}[\hspace{.35in}1.]
        \item Estimates of the form \eqref{quantitative-formulation-1}:
        \eq{\label{theorem-apriori-bound-e1}
        \sup_{t\in(0,T)}\|u(t)\|_{BMO^{-1}}\leq f(\|u^0\|_{BMO^{-1}})\qquad\forall u^0\in\mathscr{X}_\infty.
        }
        \item Estimates of the form \eqref{quantitative-formulation-2}:
        \eq{\label{theorem-a-priori-bound-e2}
        \sup_{t\in(0,T)}t^{\frac12(1+n)}\|\grad^nu(t)\|_{\infty}\leq f(\|u^0\|_{BMO^{-1}})\qquad\forall u^0\in\mathscr{X}_\infty,}
        for any integer $n\geq0$.
        \item Estimates on localized enstrophy:
        \eq{\label{theorem-a-priori-bound-enstrophy}
        \sup_{t\in(0,T)}t^{\frac14}\|\omega(t)\|_{L^2(B(0,t^{1/2}))}& \leq f(\|u^0\|_{BMO^{-1}})\qquad\forall u^0\in\mathscr{X}_\infty
        }
        with $\omega=\curl u$ the vorticity.
        \item Estimates on (dynamically localized) Prodi--Serrin--Ladyzhenskaya norms:
        \eq{\label{theorem-apriori-bound-prodiserrin}
        \|u\|_{L^p([T/2,T];L^q(B(t^{1/2})))}\leq f(\|u^0\|_{BMO^{-1}})\qquad\forall u^0\in\mathscr{X}_\infty,
        }
        for any choice of $(p,q)\in[2,\infty]\times[3,\infty]$ with $\frac2p+\frac3q=1$.
    \end{enumerate}
    More strongly, the a~priori bounds \eqref{theorem-apriori-bound-e1}--\eqref{theorem-apriori-bound-prodiserrin} fail even if the fixed time $T$ is replaced by a non-increasing function of the size of the data: $T=T(\|u^0\|_{BMO^{-1}})$. 
\end{cor}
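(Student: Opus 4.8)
The plan is to deduce Corollary~\ref{a-priori-estimate-corollary} directly from Theorem~\ref{second-theorem}, since that theorem already hands us, for every choice of parameters, a solution that is uniformly bounded in a critical norm at time zero yet reaches size $\sim|\Theta_*|$ near $t\sim|\eta_*|^{-2}$. The overall strategy is: (i) fix a target time $T$ (or a prescribed function $T(\cdot)$), (ii) choose $\eta_*$ with $|\eta_*|^{-2}$ in a suitable relation to $T$ so that the ``growth window'' $[\tfrac12|\eta_*|^{-2},2|\eta_*|^{-2}]$ lands inside $(0,T)$, (iii) let $|\Theta_*|\to\infty$ and $\epsilon_*\to0$, and (iv) observe that on the growth window the solution essentially equals the explicit shear flow $\Theta_*\sin(x\cdot\eta_*)e^{-|\eta_*|^2 t}$, whose relevant norms blow up with $|\Theta_*|$ while $\|u^0\|_{BMO^{-1}}\le\|u^0\|_{B^{-1}_{\infty,1}}\le10^5$ stays fixed. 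Since $f$ is a fixed non-decreasing function, $f(\|u^0\|_{BMO^{-1}})\le f(10^5)<\infty$, so any estimate whose left-hand side we can drive past $f(10^5)$ must fail.

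Concretely, for each item I would lower-bound the left-hand side by the contribution of the main term $\Theta_*\sin(x\cdot\eta_*)e^{-|\eta_*|^2 t}$ and absorb $E$ using $\|\grad^n E(t)\|_{L^\infty}\le\epsilon_*$ on the window. For \eqref{theorem-a-priori-bound-e2}: at $t_0=|\eta_*|^{-2}$ one has $t_0^{(1+n)/2}\|\grad^n u(t_0)\|_\infty\gtrsim t_0^{(1+n)/2}|\Theta_*||\eta_*|^n e^{-1}=e^{-1}|\Theta_*|-O(\epsilon_*)$, which exceeds $f(10^5)$ once $|\Theta_*|$ is large and $\epsilon_*$ small. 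For \eqref{theorem-apriori-bound-e1}: since $BMO^{-1}$ is critical (scaling weight $-1$) and translation-invariant, $\|\Theta_*\sin(x\cdot\eta_*)\|_{BMO^{-1}}\sim|\Theta_*||\eta_*|^{-1}$ (constant in $|\eta_*|$ after normalization — one checks this via the heat-kernel characterization, the single Littlewood--Paley block at frequency $|\eta_*|$ contributing $|\Theta_*||\eta_*|^{-1}$ to $B^{-1}_{\infty,\infty}\hookleftarrow BMO^{-1}$), and $\|E(t)\|_{BMO^{-1}}\lesssim\|E(t)\|_{L^\infty}\le\epsilon_*$, so again the LHS grows like $|\Theta_*|$. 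For \eqref{theorem-a-priori-bound-enstrophy}: compute $\omega=\curl(\Theta_*\sin(x\cdot\eta_*)e^{-|\eta_*|^2t})=(\eta_*\times\Theta_*)\cos(x\cdot\eta_*)e^{-|\eta_*|^2t}$, whose $L^2$ norm over the ball $B(0,t^{1/2})$ of radius $\sim|\eta_*|^{-1}$ is $\sim|\eta_*\times\Theta_*||\eta_*|^{-3/2}=|\Theta_*||\eta_*|^{-1/2}$ (using $\Theta_*\perp\eta_*$), so $t^{1/4}\|\omega\|_{L^2(B(0,t^{1/2}))}\sim|\Theta_*|$ after the weight; the curl of $E$ contributes $\lesssim\epsilon_*|\eta_*|^{-3/2}|\eta_*|^{1/2}=\epsilon_*|\eta_*|^{-1}$ in the weighted quantity, negligible. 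For \eqref{theorem-apriori-bound-prodiserrin}: on $[T/2,T]$ with $T\asymp|\eta_*|^{-2}$, $\|u\|_{L^p_tL^q_x(B(t^{1/2}))}$ picks up $\|\Theta_*\sin(x\cdot\eta_*)\|_{L^q(B(|\eta_*|^{-1}))}\sim|\Theta_*||\eta_*|^{-3/q}$ and a time factor $(T/2)^{1/p}\sim|\eta_*|^{-2/p}$, giving $\sim|\Theta_*||\eta_*|^{-3/q-2/p}=|\Theta_*|$ by the scaling relation $\tfrac2p+\tfrac3q=1$; the error term is $O(\epsilon_*)$ by the same counting. In each case one first chooses $|\eta_*|$ so the window fits in $(0,T)$ (or in $(0,T(10^5))$, using that $T$ is then a fixed positive number), then chooses $|\Theta_*|$ large and $\epsilon_*$ small depending on $f(10^5)$ and that fixed $T$; the final ``more strongly'' clause follows because once we commit to the bound $\|u^0\|_{BMO^{-1}}\le10^5$, the quantity $T(10^5)$ is a single positive constant and the argument is unchanged.

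The main obstacle I anticipate is not conceptual but a matter of bookkeeping the geometry of the dynamically localized norms in items 3 and 4: one must verify that the ball $B(0,t^{1/2})$ with $t$ in the growth window has radius comparable to $|\eta_*|^{-1}$, i.e., that the shear flow $\sin(x\cdot\eta_*)$ does not happen to vanish identically on that ball, and quantify the $L^q$ mass it carries there. This is handled by noting $t^{1/2}\sim|\eta_*|^{-1}$ so the ball contains a full half-period of the sine in the $\eta_*$-direction (and is all of $\Td$ in the orthogonal directions if $|\eta_*|$ is small, or a genuine ball if we think on $\Rd$), whence $\int_{B}|\sin(x\cdot\eta_*)|^q\,dx\gtrsim|\eta_*|^{-3}$; a translation of $u^0$ (which is harmless, as $BMO^{-1}$ and the equations are translation-invariant) can be used to center a crest of the sine at the origin if one wants to avoid any degeneracy. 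A secondary, purely technical point is confirming $\|g\|_{BMO^{-1}(\Td)}\lesssim\|g\|_{L^\infty(\Td)}$ (to control $E$ in item 1), which is immediate from the inclusions \eqref{besov-embedding}, \eqref{besov-bmo-inclusion} and $L^\infty\hookrightarrow B^0_{\infty,\infty}\hookrightarrow B^{-1}_{\infty,1}$ or directly from the heat-semigroup definition. With these in hand, every item reduces to the single inequality ``$c|\Theta_*|-C\epsilon_*>f(10^5)$,'' which we arrange by the choice of parameters, completing the proof.
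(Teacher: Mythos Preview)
Your proposal is correct and follows essentially the same route as the paper: apply Theorem~\ref{second-theorem} with $|\eta_*|$ chosen so that the growth window lands inside $(0,T)$ (or $(0,T(r_0))$ for the ``more strongly'' clause), then send $|\Theta_*|\to\infty$ with $\epsilon_*$ small, using \eqref{besov-embedding}--\eqref{besov-bmo-inclusion} to bound $\|u^0\|_{BMO^{-1}}$ by an absolute constant $r_0$; the paper in fact only spells out item~1 and declares items~2--4 ``nearly identical.'' Two harmless slips: the embedding $B^{-1}_{\infty,1}\hookrightarrow BMO^{-1}$ carries a constant (so write $\|u^0\|_{BMO^{-1}}\le r_0$ rather than $\le10^5$), and in item~3 the scaling actually yields $t^{1/4}\|\omega\|_{L^2(B)}\sim|\Theta_*|/|\eta_*|$ rather than $\sim|\Theta_*|$---but since $\eta_*$ is fixed before $|\Theta_*|\to\infty$, the conclusion is unaffected.
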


\begin{remark}
    While we state the theorem with the more familiar $BMO^{-1}$ norm on the right-hand side, the results continue to hold when it is replaced with the stronger norm $B^{-1}_{\infty,1}$. In fact, \eqref{theorem-apriori-bound-e1} can be weakened even further to rule out hypothetical bounds of the form
            \eqn{
        \sup_{t\in(0,T)}\|u(t)\|_{B^{-1}_{\infty,\infty}}\leq f(\|u^0\|_{B^{-1}_{\infty,1}})\qquad\forall u^0\in\mathscr{X}_\infty.
        }
        That is, even when measured in the weakest (translation-invariant) critical space, solutions are not a~priori bounded from $BMO^{-1}$ or $B^{-1}_{\infty,1}$ data.
\end{remark}

\begin{remark}
        The estimates \eqref{theorem-apriori-bound-e1}--\eqref{theorem-apriori-bound-prodiserrin} are meant to give a wide section of hypothetical quantities that might be bounded but are certainly not exhaustive; the same result holds with essentially any scale-invariant measure of the solution on the left-hand side.

        Let us comment in particular on \eqref{theorem-apriori-bound-prodiserrin}. The Prodi--Serrin--Ladyzhenskaya norm on the left-hand side (ignoring the spatial localization) plays a similar role to Strichartz norms in the context of critical dispersive equations. Some a~priori bounds analogous to \eqref{theorem-apriori-bound-prodiserrin} for critical wave and Schr\"odinger equations have been established; see \cite{bahouri1999high} and \cite{tao-pseudoconformal}, which were also mentioned in \cite{tao-quantitative-formulation-navier}.
\end{remark}

A final application of Theorem~\ref{second-theorem} pertains to the mild solution theory of Koch and Tataru in which they define the path space
\eq{\label{XT-koch-tataru-definition}
\|u\|_{X_T}\coloneqq \sup_{t\in(0,T]}t^\frac12\|u(t)\|_\infty+\sup_{x_0\in\Td}\sup_{R\in(0,T^\frac12]}\left(R^{-3}\int_0^{R^2}\int_{B(x_0,R)}|u|^2dxdt\right)^\frac12,
}
adapted to initial data $u^0\in BMO^{-1}$. In the celebrated paper \cite{koch2001well}, they show global well-posedness when $\|u^0\|_{BMO^{-1}}$ is small, as well as local well-posedness when it is large and $u^0$ belongs to the more restrictive space $VMO^{-1}$ (the closure of the test functions in the $BMO^{-1}$ norm). The proof is by the Picard method with the mild solution operator
\eqn{
\Phi(u)\coloneqq e^{t\Delta}u^0-\int_0^te^{(t-t')\Delta}\mathbb P\div u\otimes u(t')dt',
}
which they show is a contraction on the ball $B_{X_T}(0,\epsilon)$ for $\epsilon$ sufficiently small and all $T>0$.

In \cite{coic-palasek}, the author with M.\ Coiculescu showed that the Navier--Stokes equations are ill-posed (in the sense of non-uniqueness) in $X_T$ for large data in $BMO^{-1}$. In particular, there exists data $u^0$ such that $\Phi$ fails to be contractive on any bounded subset of $X_T$, for any choice of $T$. Theorem~\ref{second-theorem} allows the following strengthening of this result.

\begin{cor}\label{kt-vmo-bmo-corollary}
    For any $T,R>0$, there exists data $u^0\in B_{VMO^{-1}}(0,r_0)\cap C^\infty$ such that the Picard method with the map $\Phi$ fails to converge in $B_{X_T}(0,R)$, where $r_0$ is some absolute constant.
\end{cor}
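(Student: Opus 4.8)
The plan is to derive the corollary directly from Theorem~\ref{second-theorem}, exploiting that the path norm $\|\cdot\|_{X_T}$ of \eqref{XT-koch-tataru-definition} dominates $\sup_{t\in(0,T]}t^{1/2}\|u(t)\|_\infty$, while Theorem~\ref{second-theorem} lets this latter quantity be arbitrarily large for data of absolutely bounded critical norm. Concretely, fix $T,R>0$ and let $r_0$ be $10^5$ times the absolute constant of the embedding $B^{-1}_{\infty,1}\hookrightarrow BMO^{-1}$ coming from \eqref{besov-embedding}--\eqref{besov-bmo-inclusion}. Choose $\eta_*\in\Zd\setminus\{0\}$ with $|\eta_*|\ge T^{-1/2}$, so that $t_0\coloneqq|\eta_*|^{-2}\in(0,T]$, and $\Theta_*\in\Rd\setminus\{0\}$ with $\Theta_*\cdot\eta_*=0$ and $|\Theta_*|>e(R|\eta_*|+1)$; the last choice is permitted precisely because Theorem~\ref{second-theorem} allows $|\Theta_*|$ arbitrarily large. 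Applying that theorem with $\epsilon_*=1$ and $n_{max}=0$ produces divergence-free $u^0\in C^\infty(\Td)$ with $\|u^0\|_{B^{-1}_{\infty,1}}\le10^5$, hence $\|u^0\|_{VMO^{-1}}=\|u^0\|_{BMO^{-1}}\le r_0$, whose global strong solution obeys $u(x,t)=\Theta_*\sin(x\cdot\eta_*)e^{-|\eta_*|^2t}+E$ with $\|E(t)\|_\infty\le1$ on $[\tfrac12|\eta_*|^{-2},2|\eta_*|^{-2}]$.

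Evaluating at $t_0$ and using that $x\mapsto\sin(x\cdot\eta_*)$ attains $\pm1$ on $\Td$ since $\eta_*\in\Zd\setminus\{0\}$, we obtain $\|u(t_0)\|_\infty\ge|\Theta_*|e^{-1}-1$, and hence, directly from \eqref{XT-koch-tataru-definition},
\[
\|u\|_{X_T}\ \ge\ t_0^{1/2}\|u(t_0)\|_\infty\ \ge\ |\eta_*|^{-1}\bigl(|\Theta_*|e^{-1}-1\bigr)\ >\ R .
\]
Now suppose the Picard iteration $u_{k+1}=\Phi(u_k)$ converged in $X_T$ to a limit $v$ with $\|v\|_{X_T}\le R$. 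Since $\Phi$ is continuous on $X_T$ (the bilinear term is bounded $X_T\times X_T\to X_T$ by \cite{koch2001well}), $v=\Phi(v)$, i.e.\ $v$ is a mild solution of \eqref{nse} with data $u^0$. The classical global solution $u$ is also a mild solution with data $u^0$, and $\|u\|_{X_{T'}}<\infty$ for all $T'\le T$; because $u^0\in VMO^{-1}$, the local uniqueness half of the Koch--Tataru theory forces $u\equiv v$ on $\Td\times(0,T']$ for some small $T'>0$, and since both are smooth for $t>0$ this identity propagates to all of $\Td\times(0,T]$. Then $\|u\|_{X_T}=\|v\|_{X_T}\le R$, contradicting the display. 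Hence the Picard method cannot converge in $B_{X_T}(0,R)$.

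The only step that is not entirely routine is the identification $u\equiv v$: one must know that the global smooth solution furnished by Theorem~\ref{second-theorem} coincides with the Koch--Tataru mild solution issuing from $u^0$, hence with any $X_T$ fixed point of $\Phi$. This rests on the uniqueness assertion in the local well-posedness theory of \cite{koch2001well} for $VMO^{-1}$ data (using that a mild solution with such data and finite $X_T$ norm is small in $X_{T'}$ as $T'\to0$) together with the standard fact that two classical solutions on $\Td$ that agree near $t=0$ coincide; everything else is immediate from Theorem~\ref{second-theorem} and the definition of $\|\cdot\|_{X_T}$. (Alternatively, the lower bound $\|u\|_{X_T}>R$ can be read off from Corollary~\ref{a-priori-estimate-corollary}(2) with $n=0$ rather than re-derived.)
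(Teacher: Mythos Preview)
Your proof is correct and follows essentially the same strategy as the paper's: exhibit data bounded in $VMO^{-1}$ whose classical solution has $X_T$ norm exceeding $R$, then argue that any Picard limit in $B_{X_T}(0,R)$ would have to coincide with that solution, a contradiction. The paper phrases this via the sequence $u_{n,m}$ built in the proof of Corollary~\ref{a-priori-estimate-corollary}, whereas you choose $\eta_*$ and $\Theta_*$ directly in terms of $T$ and $R$; this is only a cosmetic difference. You are also somewhat more explicit than the paper about the uniqueness step (the paper simply writes ``by the local theory, those solutions agree with $u_{n,m}$''), correctly isolating it as the one non-routine ingredient and pointing to the $VMO^{-1}$ local well-posedness in \cite{koch2001well}.
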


In other words, failure of the Picard method for large $BMO^{-1}$ data as shown in \cite{coic-palasek} cannot be avoided by taking $u^0$ in $VMO^{-1}$ or even $C^\infty$. On the other hand, we emphasize that Corollary~\ref{kt-vmo-bmo-corollary} is only a constraint on \emph{this particular method} for constructing solutions from large data; it does not rule out their existence.

\subsection{Norm growth and ill-posedness in the Navier--Stokes equations}\label{previous-work-section}

In this short survey, we focus on the case of critical initial data. Critical spaces lie at the threshold between well- and ill-posedness and are therefore a natural setting to study the Cauchy problem. Furthermore, in a critical space, the norm is dimensionless so it is meaningful to talk about small data. Popular choices of the space\footnote{Strictly speaking, we should specify the homogeneous version of these norms; in practice, however, we will be concerned with zero-average functions on $\Td$ for which there is no distinction. We refer the reader to \S\ref{definitions-notation-section} for definitions of the spaces being used.} include, but are not limited to,
\eqn{
H^\frac12\subset L^3\subset L^{3,\infty}\subset B^{-1+\frac3p}_{p,\infty}\subset BMO^{-1}\subset B^{-1}_{\infty,\infty}
}
where $p\in(3,\infty)$, with continuous embeddings. The regularity theory in this scale is delicate and depends strongly on the exact critical space in question. Roughly speaking, there are three categories:
\begin{description}
    \item[Well-posed spaces] In the most regular spaces\footnote{These spaces are characterized by the property that the number of frequency scales with critically-sized elements is finite (and, in the case of $H^\frac12$ and $L^3$, controlled by the norm). In particular they do not include $-1$-homogeneous functions.}, for instance $H^\frac12$, $L^3$, and $VMO^{-1}$, we have local well-posedness for large data and global well-posedness for sufficiently small data~\cite{fujita1964navier,kato1984strong,koch2001well}.
    \item[Borderline spaces] In the borderline setting, for instance $L^{3,\infty}$, $\dot B^{-1+\frac3p}_{p,\infty}$ ($3<p<\infty$), and $BMO^{-1}$, there is global well-posedness for small data only~\cite{cannone1994ondelettes,planchon1998asymptotic,koch2001well}; for large data, local well-posedness was recently shown to fail in $BMO^{-1}$ \cite{coic-palasek} and remains open in the others.
    \item[Ill-posed spaces] In, for instance, $B^{-1}_{\infty,\infty}$, there is ill-posedness even from small data in the form of norm inflation, whereby arbitrarily small data can grow to any prescribed size~\cite{bourgain2008ill}. This can be considered as a strong counterexample to \eqref{quantitative-formulation-1}.
\end{description}

To this point, norm growth has been shown only of the ``norm inflation'' type, meaning it is confined to the third category of spaces. More precisely, one exhibits, for every $\epsilon>0$, initial data with $\|u^0\|_X<\epsilon$ that grows to $\|u(t)\|_X>\epsilon^{-1}$ at a later time. This was shown in the weakest critical space $X=B^{-1}_{\infty,\infty}$ by Bourgain--Pavlovi\'c in \cite{bourgain2008ill}. Their example exhibits what can be considered ill-posedness in the sense of Hadamard because the data-to-solution map is not continuous at $0$ in $B^{-1}_{\infty,\infty}$. We also mention the results of Germain~\cite{germain2008second} and Yoneda \cite{yoneda2010ill} on the extension to the stronger spaces $B^{-1}_{\infty,q}$ for $q>2$. Later, Wang \cite{wang2015ill} showed norm inflation in the sense that small data in $B^{-1}_{\infty,1}$ can become large in the same space (even while staying small in $BMO^{-1}$ and $B^{-1}_{\infty,\infty}$, as required by the Koch--Tataru theorem; see the embeddings \eqref{besov-embedding} and \eqref{besov-bmo-inclusion}).

In the setting of supercritical data, sharp ill-posedness results in the full scale of Besov spaces were obtained by Luo \cite{luo2024illposedness,luo2025sharp} using a mixing/unmixing construction.

We also mention an interesting observation of Cheskidov and Dai~\cite{cheskidov2014norm} that for hyperdissipative Navier--Stokes (i.e., $(-\Delta)^\gamma$ with $\gamma>1$), the natural space for norm inflation \`a la Bourgain--Pavlovi\'c is $B^{-\alpha}_{\infty,\infty}$ which is subcritical. It seems to be a remarkable coincidence that the scaling demanded by the construction in the present work, as well as those in \cite{bourgain2008ill,cheskidov2014norm,palasek2024non,coic-palasek}, etc., is critical precisely for the standard Navier--Stokes equations ($\gamma=1$).

\subsection{Idea of the construction}\label{ideas-subsection}

Let us emphasize that the previous norm inflation results mentioned in \S\ref{previous-work-section} are only possible in spaces where the Navier--Stokes equations are ill-posed for small data. An analogous result in $BMO^{-1}$, say, where the initial data begins arbitrarily small and becomes arbitrarily large, is ruled out by \cite{koch2001well}. Thus, the construction leading to Theorem~\ref{second-theorem} must rely in an essential way on the data being above some threshold size in $BMO^{-1}$.

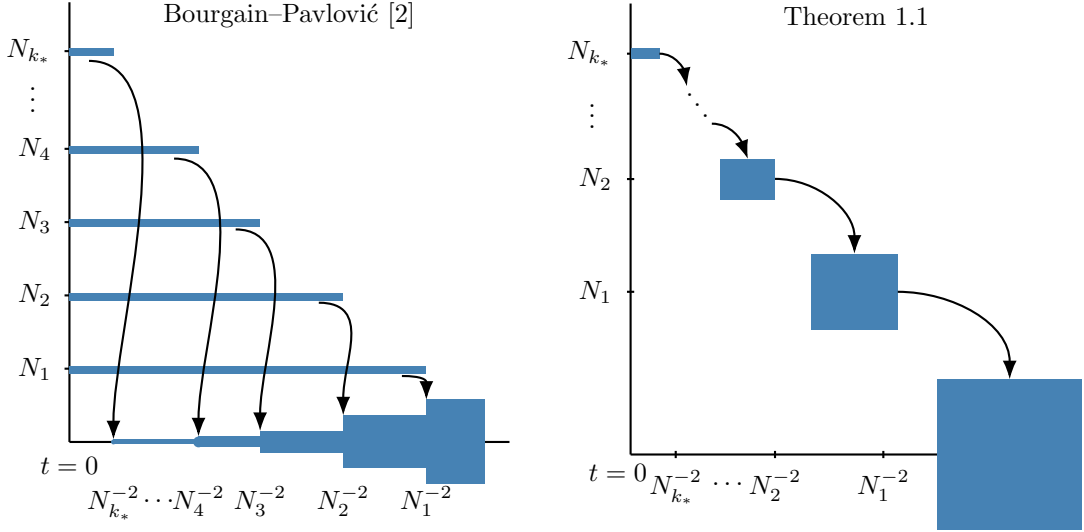
\begin{figure}[htbp]
    \centering 
    
    \begin{subfigure}{0.49\textwidth}
        \centering
\begin{tikzpicture}[
    >=Latex, 
    font=\small, 
    scale=.65
]
    \node[anchor=north] at (4.5, 9.2) {Bourgain--Pavlovi\'c~\cite{bourgain2008ill}};

    \draw[-, thick] (0,0) -- (0,9); 
    \draw[-, thick] (0,0) -- (9,0) node[anchor=north east] {}; 

    \node[below=2pt of {(0,0)}] {$t=0$};
    \node at (1.87, -1.20) {$\dots$}; 

    \node at (-.75, 7.17) {\vdots};

    \foreach \i in {1, 2, 3} {
        \pgfmathsetmacro{\ypos}{1.5 * \i}
        \pgfmathsetmacro{\xpos}{9 - 1.7*\i}
        \pgfmathsetmacro{\radius}{-.1 + 0.32*(4-\i)}

        \draw[thick] (-0.1, \ypos) -- (0.1, \ypos);
        \node[left=3pt of {(0, \ypos)}] {$N_{\i}$};
        \fill[bluee] (0, \ypos-0.1) rectangle (\xpos, \ypos+0.05);
        \node (circle\i) at (\xpos, 0) {};

        \fill[bluee] (\xpos, -\radius) rectangle (8.5, \radius);
        \node[below=9pt of circle\i] {$N_{\i}^{-2}$};
        \draw[-{Latex[length=2.5mm]}, thick]
            (\xpos - 0.5, \ypos-0.15) to[out=0, in=90, looseness=1.8-.3*\i] (\xpos, \radius);
    }

    \begin{scope}
        \pgfmathsetmacro{\i}{4}
        \pgfmathsetmacro{\ypos}{1.5 * \i}
        \pgfmathsetmacro{\xpos}{9 - 1.59*\i}
        \pgfmathsetmacro{\radius}{0.11 + 0.1*(4-\i)}

        \draw[thick] (-0.1, \ypos) -- (0.1, \ypos);
        \node[left=3pt of {(0, \ypos)}] {$N_{\i}$};
        \fill[bluee] (0, \ypos-0.1) rectangle (\xpos, \ypos+0.05);
        \node (circle4) at (\xpos, 0) {};
        \fill[bluee] (circle4) circle (\radius);
        \fill[bluee] (\xpos, -\radius) rectangle (8.5, \radius);
        \node[below=9pt of circle4] {$N_{\i}^{-2}$};
        \draw[-{Latex[length=2.5mm]}, thick]
            (\xpos - 0.5, \ypos-0.2) to[out=0, in=90, looseness=.8] (\xpos, \radius);
    \end{scope}

    \begin{scope}
        \pgfmathsetmacro{\yposk}{8.0}
        \pgfmathsetmacro{\xposk}{.9}
        \pgfmathsetmacro{\radiusk}{0.05}

        \draw[thick] (-0.1, \yposk) -- (0.1, \yposk);
        \node[left=3pt of {(0, \yposk)}] {$N_{k_*}$};
        \fill[bluee] (0, \yposk-0.1) rectangle (\xposk, \yposk+0.05);
        \node (circlek) at (\xposk, 0) {};
        \fill[bluee] (circlek) circle (\radiusk);
        \fill[bluee] (\xposk, -\radiusk) rectangle (8.5, \radiusk);
        \node[below=9pt of circlek] {$N_{k_*}^{-2}$};
        \draw[-{Latex[length=2.5mm]}, thick]
            (\xposk - 0.5, \yposk-0.2) to[out=0, in=90, looseness=.7] (\xposk, \radiusk);
    \end{scope}

\end{tikzpicture}
    \end{subfigure}
    \hfill
        \begin{subfigure}{0.49\textwidth}
\begin{tikzpicture}[
    >=Latex, 
    font=\small, 
    thick, 
    scale=.48
]
    \node[anchor=south] at (6.25, 11.6) {Theorem~\ref{second-theorem}};

    \draw[-] (0,0) -- (0,12.5); 
    \draw[-] (0,0) -- (12.5,0); 

    \node at (-.35,-.5) {$t=0$};
    \draw (1.25, -0.1) -- (1.25, 0.1);
    \node[below=2pt of {(1.25,0)}] {$N_{k_*}^{-2}$};
    \draw (4.0, -0.1) -- (4.0, 0.1);
    \node[below=2pt of {(4.0,0)}] {$N_{2}^{-2}$};
    \draw (7.0, -0.1) -- (7.0, 0.1);
    \node[below=2pt of {(7.0,0)}] {$N_{1}^{-2}$};
    \node at (2.75, -0.82) {$\dots$};

    \node[left=3pt of {(0, 4.51)}] {$N_{1}$};   
    \node[left=3pt of {(0, 7.635)}] {$N_{2}$}; 
    \node[left=3pt of {(0, 11.11)}] {$N_{k_*}$};
    \draw (-0.1, 4.51) -- (0.1, 4.51);
    \draw (-0.1, 7.635) -- (0.1, 7.635);
    \draw (-0.1, 11.11) -- (0.1, 11.11);
    \node at (-1.06, 9.6) {\vdots};  

    \fill[bluee] (0, 10.97) rectangle (.8, 11.25); 
    \coordinate (Nk_end) at (.8, 11.11);
    \fill[bluee] (2.5, 7.08) rectangle (4.0, 8.19);
    \coordinate (N2_start) at (3.25, 8.19);
    \coordinate (N2_end) at (4.0, 7.64);

    \fill[bluee] (5.0, 3.47) rectangle (7.4, 5.56);
    \coordinate (N1_start) at (6.2, 5.56); 
    \coordinate (N1_end) at (7.4, 4.51); 

    \fill[bluee] (8.5, -2.08) rectangle (12.5, 2.08); 
    \coordinate (Final_start) at (10.5, 2.08);    

    \begin{scope}
        \coordinate (A1_end) at (1.56, 10.20);
        \coordinate (A2_start) at (2.25, 9.17); 
        \draw[-{Latex[length=2.5mm]}] (Nk_end) to[out=0, in=100,looseness=1.] (A1_end);
        \node[rotate=-15] at (1.91, 9.90) {$\ddots$}; 
        \draw[-{Latex[length=2.5mm]}] (A2_start) to[out=0, in=110, looseness=1.] (N2_start);
    \end{scope}

    \draw[-{Latex[length=2.5mm]}] (N2_end) to[out=0, in=90, looseness=1] (N1_start);

    \draw[-{Latex[length=2.5mm]}] (N1_end) to[out=0, in=90, looseness=1] (Final_start);

\end{tikzpicture}
    \end{subfigure}
    
    \caption{We illustrate the conceptual distinction between the ill-posedness construction in \cite{bourgain2008ill} (left) and the construction leading to Theorem~\ref{second-theorem} (right). The axes represent time (horizontal) and frequency scale (vertical), while the thickness of the rectangles indicates the $B^{-1}_{\infty,\infty}$ norm of $u(t)$ projected to the shell. In both cases, a large concentration accumulates at the lowest frequency: in \cite{bourgain2008ill}, by many single-step cascades; in the present work, via a single iterated cascade.}
    \label{figure}
\end{figure}

Before introducing the ideas toward Theorem~\ref{second-theorem}, we recall the $B^{-1}_{\infty,\infty}$ construction of Bourgain--Pavlovi\'c for the purpose of comparison. (See the left panel in Figure~\ref{figure}.) The main idea is to consider initial data $u^0(x)=\sum_{k=1}^{k_*}\epsilon N_ku^0_k(x)$, where $u^0_k(x)$ are shear flows with bounded, oscillatory profiles supported at a lacunary sequence of frequencies $N_k$, and $k_*$ is a large integer. Clearly such data is $O(\epsilon)$ in $B^{-1}_{\infty,\infty}$. Evolving time forward for each $k$ to obtain an (approximate) solution $u_k(x,t)$, it is straightforward to arrange that the only significant nonlinear interactions are the self-interactions of the $u_k(x,t)$, and that these interactions strictly send energy to some fixed low frequency shell, say $N_0$. Each $k$ can increase\footnote{Here we are making heuristic use of the Littlewood--Paley projections; see \S\ref{definitions-notation-section} for the relevant notation.} $\|P_{\sim N_0}u\|_{L^\infty}$ by $\sim \epsilon^2N_0$. Taking $k_*\gg\epsilon^{-3}$, one attains the desired norm growth.

In \cite{bourgain2008ill}, growth in the $N_0$ frequency shell is driven by the high-high-to-low nonlinear interactions $P_{\sim N_0}\mathbb P\div (P_{\sim N_k}u)\otimes (P_{\sim N_k}u)$ across all $k\in\{1,2,\ldots,k_*\}$, with the largeness coming from the number of contributing scales $k_*$. Unfortunately, taking $k_*$ large also makes the data large in $BMO^{-1}$ and $B^{-1}_{\infty,q}$ for $q<\infty$. This tradeoff becomes critical at $q=2$ (cf., the embedding \eqref{besov-bmo-inclusion}).

In contrast, our construction leading to Theorem~\ref{second-theorem} uses data supported \emph{only} in the frequency shell $N_{k_*}$. (See the right panel in Figure~\ref{figure}.) The data is constructed so that it generates a particular perturbation at the smaller frequency shell $N_{k_*-1}$ via the high-high-low interaction $P_{N_{k_*-1}}\mathbb P\div (P_{N_{k_*}}u)\otimes (P_{N_{k_*}}u)$. Once this perturbation has appeared, but before it is dissipated by the Laplacian, it will generate yet another perturbation at scale $N_{k_*-2}$, and so on down to a minimum scale $N_0$. The perturbation that arrives at the smallest frequency should match the leading term claimed in Theorem~\ref{second-theorem}. We can arrange the time scales in such a way that once the component at the $k$th shell has successfully perturbed the $(k-1)$st, what remains will dissipate away by the heat equation. Note that since there is no force or convex integration-type correction for $t>0$, all of this behavior must be autonomous and encoded in the initial data.

The source of the growth is the quadratic nonlinearity, which effectively squares the $B^{-1}_{\infty,\infty}$ sizes of modes during the inverse cascade. More quantitatively, suppose the initial data $u^0$ (which, recall, is localized in the $N_{k_*}$ frequency shell) has amplitude $\sim C_{k_*}$. If the projection of $u$ to the $N_{k_*}$ shell evolves under the heat flow to leading order, then the force it exerts on the $N_{k_*-1}$ shell is
\eqn{
|P_{N_{k_*-1}}\mathbb P\div (P_{N_{k_*}}u)\otimes (P_{N_{k_*}}u)|\sim N_{k_*-1}(C_{k_*}e^{-N_{k_*}^2t})^2.
}
We integrate in time to estimate that the total size of the perturbation is on the order of $N_{k_*-1}(C_{k_*}/N_{k_*})^2$ once $t\gtrsim N_{k_*}^{-2}$. Iterating this step as $k$ descends from $k_*$ down to $0$, and letting $C_k$ be the maximum amplitude of $P_{N_k}u$, we can expect the recurrence relation $C_k\sim N_k(C_{k+1}/N_{k+1})^2$ for all $k\in\{0,1,\ldots,k_*-1\}$. This leads to
\eq{
C_k\sim N_k\left(\frac{C_0}{N_0}\right)^{2^{-k}},\qquad k=0,1,\ldots,k_*.\label{recurrence-rough-intro-version-C}
}
The quantities $C_{k_*}/N_{k_*}$ and $C_0/N_0$ are essentially the $B^{-1}_{\infty,\infty}$ norms of $u(t)$ at $t=0$ and $t\sim N_0^{-2}$, respectively. Indeed, by time $N_0^{-2}$, higher frequency components of the solution have been made negligible by the dissipation. Thus, for any arbitrarily large choice of $C_0/N_0$, we can make $C_{k_*}/N_{k_*}=O(1)$
 by choosing a sufficiently large $k_*$. We emphasize that unlike in \cite{bourgain2008ill}, taking $k_*$ large is not costly for measuring the data in $B^{-1}_{\infty,1}$, etc., because there is always just a single mode present at $t=0$.

 Observe that the recurrence relation \eqref{recurrence-rough-intro-version-C} highlights that growth can only occur for large data ($C_0\gg N_0$), consistent with the Koch--Tataru theorem. A similar effect was observed in \cite{palasek2024non} and \cite{coic-palasek}.

\subsection{Notation, function spaces, and basic estimates}\label{definitions-notation-section}

We work on the 3D torus $\Td=(\mathbb R /2\pi\mathbb Z)^3$ and define $L^p(\Td)$ spaces in the standard way. When there is no ambiguity, we will often write $\|f\|_p$ in place of $\|f\|_{L^p(\Td)}$. Accordingly, if $f(x,t)$ is a function of time and space, we write $\|f(t)\|_p$ to mean the norm is taken only in space.

For $f\in L^2(\Td)$, we define the Fourier series and inverse,
\eqn{
\hat f(\xi)=\mathcal F f(\xi)\coloneqq \fint_\Td f(x)e^{-ix\cdot\xi}dx,&\qquad\xi\in\Zd\\
\mathcal F^{-1}f(x)\coloneqq\sum_{\xi\in\Zd}\hat f(\xi)e^{ix\cdot\xi},&\qquad x\in \Td
}
where $\fint_\Td=(2\pi)^{-3}\int_{\Td}$.

Let\footnote{While we make frequent use of the operators $P_N$, we very rarely refer to the symbols $\varphi$ and $\psi$. Thus, there should be no confusion with $\varphi_j$ and $\psi_k$ defined in Definitions~\ref{mikado-definition} and \ref{v-psi-etc-definition}, which always carry an index.} $\tilde\varphi\in C_c^\infty([0,\frac34))$ and $\tilde\psi\in C_c^\infty(\frac23,\frac32)$ be a partition of unity on $[0,\infty)$. Define the radial functions $\varphi(x)=\tilde\varphi(|x|)$ and $\psi(x)=\tilde\psi(|x|)$. For each dyadic number $N\in2^\mathbb N=\{1,2,4,8,\ldots\}$, we define the Littlewood--Paley projection $P_N$ as Fourier multiplier
\eqn{
P_Nf(x)\coloneqq \mathcal F^{-1}(\psi(\xi/N)\hat f(\xi))(x).
}
Variants of $P_N$ are defined in the obvious way; for instance, $P_{>N}\coloneqq\sum_{M>N}P_M$, etc. Here and elsewhere, sums over un-indexed capital letters $N$ and $M$ are implicitly taken over the dyadic numbers $2^\mathbb N$. The un-indexed dyadic numbers $N,M\in2^\mathbb N$ should not be confused with the distinguished sequences of indexed frequency scales $N_k,M_k$ defined in \S\ref{principal-construction-section}.

We freely make use of standard estimates for Littlewood--Paley projections and other well-known Fourier multipliers including the heat propagator $e^{t\Delta}$ and Leray projection $\mathbb P$. We refer to \cite[Appendix C]{coic-palasek} for a careful treatment of Littlewood--Paley theory and other standard estimates for Fourier multipliers on the torus.

We define the Besov spaces $B^s_{p,q}$ as distributions on $\Td$ for which the norm
\eqn{
\|u\|_{B^{s}_{p,q}}\coloneqq \big\|N^s\|P_Nu\|_{L^p(\Td)}\big\|_{\ell^q(2^\mathbb N)}
}
is finite. Once again, we will be measuring fields on $\Td$ with zero average so we neglect the distinction between $B^s_{p,q}$ and $\dot B^s_{p,q}$. We record the embedding
\eq{\label{besov-embedding}
B^s_{p,q_1}\subset B^s_{p,q_2}
}
when $q_1\leq q_2$, which is immediate from the corresponding embedding in the scale $\ell^q(\mathbb N)$. An alternative definition, which is enlightening but not used in practice in this work, is $\|f\|_{B^s_{p,q}}\sim_{s,p,q}\|t^{-\frac s2}e^{t\Delta}f\|_{L^q((0,\infty),t^{-1}dt;L^p)}$. It is easy to see that in the context of zero-average functions on the torus, $B_{\infty,\infty}^{n+\alpha}$ corresponds to the H\"older space $C^{n,\alpha}$ for $n\geq0$ an integer and $\alpha\in(0,1)$.

We give two equivalent definitions of $BMO^{-1}$. The standard definition is that a scalar function $f$ is in $BMO^{-1}$ if $f=\div g$, where $g$ is a vector field with $BMO$ components. The norm of $f$ is then defined as the minimal norm of $g$ in $BMO$. Here we define $BMO(\Rd)$ as the space of locally $L^1$ functions (modulo addition of a constant) satisfying
\eqn{
\|g\|_{BMO}&\coloneqq\sup_{Q}\fint_Q\left|g(x)-\Big(\fint_Qg\Big)\right|dx,
}
the supremum being taken over cubes in $\Rd$. Then $BMO(\Td)$ is precisely the space of periodic functions that are in $BMO(\Rd)$ when regarded as functions on all of $\Rd$, with the norm being inherited. Note that in $\Rd$ one might consider only cubes below (say) unit side length, but in the periodic setting there is no significant distinction.

Alternatively, one can define
\eqn{
\|f\|_{BMO^{-1}}=\|e^{t\Delta}f\|_{X_\infty}
}
with $X_T$ as in \eqref{XT-koch-tataru-definition}. Combining the heat flow formulation of the Besov spaces with this formulation of $BMO^{-1}$, one can verify the embedding
\eq{\label{besov-bmo-inclusion}
B^{-1}_{\infty,2}\subset BMO^{-1}\subset B^{-1}_{\infty,\infty}.
}
We also define the space $VMO^{-1}$ which is equipped with the same norm as $BMO^{-1}$, but contains only the closure of the test functions in that norm.

Finally, we make a few comments about notation. We denote the Euclidean basis in $\Rd$ by $\{e_1,e_2,e_3\}$. For $x\in\Rd$, we write $\langle x \rangle\coloneqq (1+|x|^2)^\frac12$. For $x,y\in\Rd$, we write $x\odot y\coloneqq \frac12(x\otimes y+y\otimes x)$, leading to the product rule $(x+y)\otimes(x+y)=x\otimes x+y\otimes y+2x\odot y$. If $X$ is any condition, we write the indicator function of $X$ as $\mathbf1_X$.

The notation $X\lesssim Y$ indicates that $X\leq CY$ for an absolute constant $C>0$, and analogously for $\gtrsim$. We write $X\sim Y$ to mean both $X\lesssim Y$ and $X\gtrsim Y$. If $X>0$, $O(X)$ stands in for any quantity bounded by $CX$ with $C>0$ an absolute constant. We adorn $\lesssim$, $O$, etc.\ with subscripts to indicate that the implicit constants may depend on the subscripted quantities.

The plan of the paper is as follows: the heart of the paper is \S\ref{second-theorem-proof-section} where we detail the construction leading to Theorem~\ref{second-theorem}. In \S\ref{principal-construction-section} we build the leading part of the solution; in \S\ref{full-solution-correction-subsection} we show how to perturb it to create the global solution $u(t,x)$; finally in \S\ref{final-estimates-conclusion-subsection} we conclude Theorem~\ref{second-theorem}. From there, Corollaries~\ref{a-priori-estimate-corollary} and \ref{kt-vmo-bmo-corollary} are straightforward and proved in \S\ref{corollary-section}. Appendix~\ref{mikado-appendix} contains some details of the geometry of the building blocks for the construction in \S\ref{principal-construction-section}.

\subsection*{Acknowledgments}

This work was supported by the National Science Foundation under Grant No.\ DMS-2424441.

\section{Proof of Theorem~\ref{second-theorem}}\label{second-theorem-proof-section}

\subsection{Construction of the principal part}\label{principal-construction-section}

We begin by defining several parameters that appear in the construction. Let $N_k=\big\lceil |\eta_*|A^{b^k-1}\big\rceil$ for some $b\in(1,2)$ and large $A>1$ to be specified. It will suffice to take, for instance, $b=3/2$. The $N_k$ are frequency scales, defining the oscillation of the components of the solutions. Let us point out the special case $N_0=\lceil|\eta_*|\rceil$.

We also fix a parameter $\gamma\in(b^{-1},1)$ obeying, more specifically,
\eqn{
\frac{b+1}{2b}<\gamma<\frac{5-b}4.
}
For each $k\geq1$, we define $M_k=\big\lceil  |\eta_*|A^{\gamma b^k-1}\big\rceil$. The length scales $M_k^{-1}$ capture the spatial localization of the component of the solution in the $N_k$ frequency shell. (Note that $M_0$ will not appear in the construction.)

We record the fact that
\eq{\label{M-over-N-fact-small}
M_k/N_k\sim A^{-(1-\gamma)b^k}\leq A^{-(1-\gamma)b}\qquad\forall k\geq1
}
which can be made as small as desired by choosing $A$ large. Arranging $M_k\ll N_k$ simplifies the estimates in two ways: it ensures that the solution is divergence-free to leading order, and that when the solution evolves under heat flow, the effect on the spatial localization is negligible. 

The projection of the velocity to frequencies near $N_k$ grows to a maximum amplitude which we denote by $C_k$, defined to solve the recursive relation
\eq{\label{C_relation}
C_{k+1}^2=N_k^{-1}N_{k+1}^{2}C_k,\quad C_0=N_0\frac{|\Theta_*|}{|\eta_*|}.
}
It follows that $C_k$ is given precisely by
\eq{\label{C_formula}
C_k=N_k\left(\frac{|\Theta_*|}{|\eta_*|}\right)^{2^{-k}}.
}

We also fix a large $k_*\in\mathbb N$, the number of active scales in the construction. The exact value will be specified later in order to ensure $C_{k_*}=O(1)$.

\begin{remark}
    We briefly explain the order in which the key quantities are chosen. First, $\gamma$ and $b$ can be chosen arbitrarily to satisfy the above relations. The targeted scales $\Theta_*$, $\eta_*$, and $\epsilon_*$ are fixed in the statement of the theorem. The number of scales $k_*$ is chosen large depending only on $|\Theta_*|$. Finally, $A$ is taken large depending on all of the other parameters.

    We also remark that at each stage in the construction, it is possible to control only a finite number of derivatives. Indeed, the implicit constants grow with the order of differentiation, and $A$ needs to be chosen large enough to defeat them all. Thus, in order to control $n_{max}$ derivatives of the error as claimed in Theorem~\ref{second-theorem}, it is necessary to keep track of many more derivatives over the course of the proof. This explains the role of the quantities $n_{max}'>n_{max}''>n_{max}'''>n_{max}$ appearing in Lemma~\ref{construct_coefficients_lemma} and Propositions~\ref{f-prop}--\ref{w-prop}.
\end{remark}

\subsubsection{The oscillatory part}

We make use of the following rank-one decomposition of symmetric positive definite tensors. Similar decompositions are standard in the convex integration literature; see for instance \cite{de2009euler}.

\begin{lem}[Nash lemma]\label{nash-lemma}
    Define the ball $B=B(\Id,\frac1{7})$ centered at the identity in the space of real symmetric $3\times3$ matrices, equipped with the entry-wise maximum norm. There exist $\theta_1,\ldots,\theta_6\in\mathbb Q^3\cap\mathbb S^2$ and coefficient functions $\Gamma_1,\ldots,\Gamma_6\in C^\infty(B;[\frac1{5},1])$ such that
\eq{\label{nash-lemma-span-equation}
M=\sum_{j=1}^6\Gamma_j(M)^2\theta_j\otimes \theta_j\qquad \forall M\in B.
}
Further, $\Gamma_j$ obey the uniform bounds
\eq{\label{gamma-bounds}
\|\grad^m\Gamma_j\|_{L^\infty(B)}\lesssim_m1
}
for all $m\geq1$.
\end{lem}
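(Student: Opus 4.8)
The plan is to place $B$ inside the open simplicial cone spanned by six linearly independent rank-one tensors $\theta_j\otimes\theta_j$, express each $M\in B$ in the associated (affine) coordinates over that cone, and take square roots.

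First I would fix a \emph{eutactic} reference configuration: let $v_1,\dots,v_6\in\mathbb S^2$ be the six face-diagonal directions of the unit cube, namely $\tfrac1{\sqrt2}(e_i+e_j)$ and $\tfrac1{\sqrt2}(e_i-e_j)$ for $1\le i<j\le3$. A short computation gives $\sum_{j=1}^6 v_j\otimes v_j=2\,\Id$ and shows that $\{v_j\otimes v_j\}_{j=1}^6$ is a basis of the $6$-dimensional space of real symmetric $3\times3$ matrices; in particular $\Id=\sum_j\tfrac12\,v_j\otimes v_j$, with all six coefficients equal to $\tfrac12>0$. (The six diagonals of a regular icosahedron would serve equally well.) The two properties just invoked---linear independence of $\{w_j\otimes w_j\}$ and positivity of the unique coefficients of $\Id$ in that basis---are open conditions on $(w_1,\dots,w_6)\in(\mathbb S^2)^6$. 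Since $\mathbb Q^3\cap\mathbb S^2$ is dense in $\mathbb S^2$, I can choose $\theta_1,\dots,\theta_6\in\mathbb Q^3\cap\mathbb S^2$ as close to $v_1,\dots,v_6$ as I like while preserving both properties, so that $\Id=\sum_j c_j^0\,\theta_j\otimes\theta_j$ with each $c_j^0$ close to $\tfrac12$.

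Next, for a symmetric matrix $M$ let $c(M)=(c_1(M),\dots,c_6(M))\in\mathbb R^6$ be the coordinates of $M$ in the basis $\{\theta_j\otimes\theta_j\}$, so $M=\sum_j c_j(M)\,\theta_j\otimes\theta_j$ and $c(\Id)=c^0$; the map $c$ is a linear isomorphism. Taking the $\theta_j$ close enough to the $v_j$ controls the operator norm of $c$, so that for every $M\in B=B(\Id,\tfrac17)$ one has $|c_j(M)-c_j^0|$ small, hence $c_j(M)\in[\tfrac1{25},1]$ for every $j$. I then set $\Gamma_j(M):=\sqrt{c_j(M)}$ on $B$; by construction $\Gamma_j\in C^\infty(B;[\tfrac15,1])$ and $M=\sum_{j=1}^6\Gamma_j(M)^2\,\theta_j\otimes\theta_j$, which is \eqref{nash-lemma-span-equation}. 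The bounds \eqref{gamma-bounds} are then immediate: $c_j$ is affine in $M$, so $\grad c_j$ is a constant of size $O(1)$ and $\grad^m c_j\equiv0$ for $m\ge2$, while $t\mapsto\sqrt t$ is smooth with bounded derivatives on $[\tfrac1{25},1]$; the chain rule (Fa\`a di Bruno) gives $\|\grad^m\Gamma_j\|_{L^\infty(B)}\lesssim_m1$ for all $m\ge1$.

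The one point requiring genuine care---rather than a ``choose the parameter large enough'' argument of the sort available elsewhere in the paper---is the numerology of the previous paragraph: one must commit to an explicit rational sextuple $\{\theta_j\}$ and then verify, by bounding the operator norm of $M\mapsto c(M)$ with respect to the entrywise maximum norm, that the ball of radius $\tfrac17$ about $\Id$ really is carried into $[\tfrac1{25},1]^6$. The generous width of the target interval and the freedom to take the $\theta_j$ arbitrarily close to a eutactic configuration make this a finite, elementary estimate, but it is where the constants $\tfrac17$ and $[\tfrac15,1]$ get pinned down.
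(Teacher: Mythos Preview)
Your proof is correct and essentially identical to the paper's: express $M$ in a basis $\{\theta_j\otimes\theta_j\}$ of rank-one symmetric tensors with $\Id$ lying in the interior of the positive cone, then take square roots of the affine coordinate functions. The paper differs only in committing upfront to the explicit rational sextuple $\theta_1=(\tfrac45,0,\tfrac35),\ldots,\theta_6=(-\tfrac45,\tfrac35,0)$ and computing the operator-norm constant $\sum_{k\le l}|b_{jkl}|=\tfrac{25}{8}$ directly (yielding $\Gamma_j^2\in[\tfrac{3}{56},\tfrac{53}{56}]\subset[\tfrac1{25},1]$), whereas you reach the same conclusion by perturbing the cube face-diagonals---for which the analogous constant is $\tfrac52$, giving $\Gamma_j^2\in[\tfrac17,\tfrac67]$ with room to spare---and invoking density of $\mathbb Q^3\cap\mathbb S^2$.
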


The proof is from an explicit definition\footnote{We point out that the parameters $\frac17$ and $\frac15$ appearing in the statement could be chosen in many different ways; this particular choice is favorable (but certainly not optimal) in its contribution to the estimated size of the data.} of $(\theta_j)_{j=1,\ldots,6}$ and $(\Gamma_j)_{j=1,\ldots,6}$ which we give in Appendix~\ref{mikado-appendix}.

Having fixed the flow directions $\theta_j$, we now define the geometry of the Mikado flow building blocks.

\begin{define}\label{mikado-definition}
    Let $\theta_j$ ($j=1,2,\ldots,6$) be as in Lemma~\ref{nash-lemma}. We define the following:
    \begin{itemize}
        \item oscillation directions $(\eta_j)_{j=1}^6$ chosen from\footnote{This is clearly possible; see the proof of Lemma~\ref{nash-lemma} in Appendix~\ref{mikado-appendix}.} $\{e_1,e_2,e_3\}$ such that $\eta_j\cdot\theta_j=0$,
        \item for each $k\in\{1,2,\ldots,k_*-1\}$, a standard mollifier $\phi_k$ at length scale
        \eqn{
        \ell_k\coloneqq N_k^{-\frac34}N_{k+1}^{-\frac14},        
        }
        \item Mikado flow positions $(x_j)_{j=1}^6\in\Td$, radius $\delta_0>0$, lines\footnote{By selecting $\theta_j\in\mathbb Z^d$, we ensure that the lines $\mathcal L_j$ are indeed $2\pi$-periodic; this is clearly essential for \eqref{mikado-separation-disjoint-condition}.} $\mathcal L_j=x_j+\mathbb R\theta_j\bmod2\pi\mathbb Z^3$, and cylinders $\mathcal C_j(r)=\{x\in\Td:\dist_{\Td}(x-\mathcal L_j)< r\}$ such that
        \eq{\label{mikado-separation-disjoint-condition}
        \dist_{\Td}(\mathcal L_{j_1},\mathcal L_{j_2})>\frac{201}{100}\delta_0\qquad\forall j_1\neq j_2,
        }
        \item Mikado flow profiles $(\varphi_j)_{j=1}^6\in C_c^\infty(\mathcal C_j(\delta_0))$ obeying the bounds
        \eq{
        0\leq\varphi_j\leq1,\qquad\|\grad^n\varphi_j\|_\infty\lesssim_n1,\label{mikado-pointwise-bounds}
        }
        the shear flow property
        \eq{\label{shear-flow-mikado-property}\theta_j\cdot\grad\varphi_j=0,}
        and the normalization conditions
        \eq{\varphi_j\equiv1\text{ on }\mathcal L_j\label{mikado-non-degeneracy-equals-one-1},\\
        \|\varphi_j\|_{2}^2= (10\pi^2-\epsilon_0)\delta_0^2\label{mikado-l2-normalization}
        }
        for all $j\in\{1,\ldots,6\}$ and any fixed $\epsilon_0\in(0,1)$.
    \end{itemize}
\end{define}

Note that there is tension between the pointwise upper bound in \eqref{mikado-pointwise-bounds} and the $L^2$ normalization \eqref{mikado-l2-normalization}. The volume of the cylinder $\mathcal C_j(\delta_0)$ restricted to one period of $\Td$ is $10\pi^2\delta_0^2$ (because $5\theta_j\in\mathbb Z^3$; see Appendix~\ref{mikado-appendix}), so \eqref{mikado-l2-normalization} can be achieved by setting, for instance, $\varphi_j\equiv1$ on nearly its whole support.

In order to make the constants explicit, we make the following concrete choice of $\delta_0$. The proof will be deferred to Appendix~\ref{mikado-appendix}.

\begin{lem}\label{mikado-delta-lemma}
    There exists a set of positions $(x_j)_{j=1}^6$ as in Definition~\ref{mikado-definition} such that we may take $\delta_0=1/15$.
\end{lem}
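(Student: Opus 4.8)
The statement is really just that the separation requirement \eqref{mikado-separation-disjoint-condition} can be met with the fixed absolute value $\delta_0=1/15$; once $\delta_0$ is this small, the remaining data in Definition~\ref{mikado-definition} (the profiles $\varphi_j$ satisfying \eqref{mikado-pointwise-bounds}--\eqref{mikado-l2-normalization}, the mollifier scale, and the embeddedness of each solid torus $\mathcal C_j(\delta_0)$) is arranged in a routine fashion. So the plan is to pin down positions $x_1,\dots,x_6\in\Td$ and verify the $\binom{6}{2}=15$ inequalities in \eqref{mikado-separation-disjoint-condition}, after first reducing each of them to a one-dimensional lattice computation.

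For $j_1\neq j_2$ the directions $\theta_{j_1},\theta_{j_2}$ are distinct unit vectors, hence non-parallel, so the lines $\mathcal L_{j_1},\mathcal L_{j_2}$ are skew in $\Rd$. Let $n=n_{j_1j_2}$ denote the primitive integer vector parallel to $\theta_{j_1}\times\theta_{j_2}$ (which is rational since $5\theta_j\in\Zd$) and $\hat n=n/|n|$. The Euclidean distance between the two lines is $|(x_{j_1}-x_{j_2})\cdot\hat n|$, and minimizing over the $2\pi\Zd$-translates of one of them yields
\begin{equation*}
\dist_{\Td}(\mathcal L_{j_1},\mathcal L_{j_2})=\dist_{\mathbb R}\!\Big((x_{j_1}-x_{j_2})\cdot\hat n,\ \tfrac{2\pi}{|n|}\mathbb Z\Big),
\end{equation*}
where we used $\{m\cdot\hat n:m\in\Zd\}=|n|^{-1}\mathbb Z$, a consequence of Bézout and the primitivity of $n$. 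Thus \eqref{mikado-separation-disjoint-condition} for the pair $(j_1,j_2)$ holds exactly when the real number $(x_{j_1}-x_{j_2})\cdot\hat n$ avoids the $\tfrac{201}{100}\delta_0$-neighborhood of the lattice $\tfrac{2\pi}{|n|}\mathbb Z$. This neighborhood is proper precisely when $\tfrac{201}{100}\delta_0<\tfrac{\pi}{|n|}$, i.e.\ $|n|<\tfrac{100\pi}{201}\delta_0^{-1}=\tfrac{1500\pi}{201}$; reading off the explicit $\theta_j$ from Appendix~\ref{mikado-appendix}, the largest $|n_{j_1j_2}|$ over all pairs is $\sqrt{481}$ (attained on the pairs spanning a non-coordinate plane), and indeed $\sqrt{481}<\tfrac{1500\pi}{201}$, so there is room on every pair -- but only barely, of order $\tfrac{\pi}{\sqrt{481}}-\tfrac{201}{1500}\approx 10^{-2}$ on the worst ones.

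With this reduction, it remains to choose $x_1,\dots,x_6$ meeting all fifteen constraints at once. I would take the $x_j$ to be rational multiples of $\pi$ on a common grid fine enough that each constraint becomes a linear congruence solvable with a fixed margin, and then check the fifteen inequalities directly. It is convenient to dispatch the few ``tight'' pairs first (those with $|n_{j_1j_2}|=\sqrt{481}$): for these, $(x_{j_1}-x_{j_2})\cdot\hat n$ must land in the narrow window of width $\approx 2\cdot10^{-2}$ about the gap midpoints of $\tfrac{2\pi}{\sqrt{481}}\mathbb Z$, which fixes a few linear combinations of the $x_j$. The remaining pairs have $|n|\le\sqrt{41}$, hence windows of width comparable to $\pi$, so the leftover freedom in the $x_j$ accommodates them easily; one must additionally ensure that no two of the six positions coincide, which would make a (loose) pair of lines intersect.

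The main obstacle is exactly this simultaneous satisfaction of the tight constraints. A soft counting argument is unavailable: for a pair with $|n|=\sqrt{481}$ the set of bad position-differences fills a fraction $\tfrac{201\sqrt{481}}{1500\pi}>0.9$ of $\Td$, so a union bound over the fifteen pairs is hopeless, and one is forced into an explicit elementary choice of the $x_j$ together with a finite case check (hence its placement in Appendix~\ref{mikado-appendix}). The statement is true with these concrete numbers precisely because of the favorable arithmetic: $\sqrt{481}$, not something larger, is the worst denominator among the cross products of the chosen Pythagorean directions, and $\delta_0=1/15$ keeps $\tfrac{201}{100}\delta_0$ just below $\pi/\sqrt{481}$.
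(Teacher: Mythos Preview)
Your reduction to the one-dimensional lattice distance $\dist_{\mathbb R}\big((x_{j_1}-x_{j_2})\cdot\hat n,\tfrac{2\pi}{|n|}\mathbb Z\big)$ is exactly the formula the paper uses, and your identification of $\sqrt{481}$ as the worst denominator, together with the tight inequality $\tfrac{201}{1500}<\tfrac{\pi}{\sqrt{481}}$, is precisely the numerical fact driving the lemma. The paper's proof follows the same route: it writes down the distance formula, exhibits explicit positions $x_1,\dots,x_6$, and tabulates all fifteen distances, finding the minimum $\tfrac{8(49-15\pi)}{5\sqrt{481}}\approx0.1368>\tfrac{201}{1500}$.

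The one substantive difference is that the paper chooses the $x_j$ with \emph{rational} (not $\pi$-rational) coordinates, so each pairwise distance comes out as an explicit expression of the form $\tfrac{a\pi+b}{c\sqrt{d}}$ with integers $a,b,c,d$; your proposed $\pi$-rational grid would instead yield distances of the form $\tfrac{r\pi}{|n|}$, which is arguably cleaner and makes the tight-pair constraints into congruences modulo $2$ on the numerators. Either scheme works. Your heuristic that the six tight constraints (the two triangles $\{1,2,5\}$ and $\{3,4,6\}$) can be handled first and the remaining nine loose ones absorbed afterward is correct, but the paper simply bypasses that discussion by writing down a single working configuration and computing; as you anticipated, the proof is ultimately just a finite verification.
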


Finally we can define $v(x,t)$, the leading part of the solution.

\begin{define}\label{v-psi-etc-definition}
    For each $k\in\{1,2,\ldots,k_*-1\}$ and $j\in\{1,2,\ldots,6\}$, we define the oscillatory profiles
    \begin{align*}
    \Psi_{j,k}(x,t) &= \varphi_{j}(M_k x) \sin(N_k\eta_j\cdot x)e^{-N_k^2t}
    \end{align*}
    from which we build the vector potentials\footnote{Here and elsewhere, we write $\sum_j$ to mean the sum over $j\in\{1,2\ldots,6\}$.}
    \begin{align*}
        \psi_k(x,t)&=N_k^{-2}\phi_k*\sum_ja_{j,k}(x)\Psi_{j,k}(x,t)\theta_j
    \end{align*}
    for some coefficients $a_{j,k}(x)$ to be specified. Then the velocity field is defined as
    \eqn{
    v(x,t)&=\sum_{k=0}^{k_*}v_k(x,t)
    }
    where $v_k(x,t)$ are the frequency localized components given by
    \begin{align*}
v_k(x,t)&=C_k(1-e^{-2N_{k+1}^2t})\mathbb P\Delta \psi_k(x,t).
\end{align*}
Now we define the two exceptional cases: first, when $k=0$, we instead have
\eqn{
\Psi_{j,0}(x,t)&=\sin(x\cdot\eta_*)e^{-|\eta_*|^2t},\\
\psi_0(x,t)&=N_0^{-2}\sum_ja_{j,0}(x)\Psi_{j,0}(x,t)\frac{\Theta_*}{|\Theta_*|}
}
with $v_0(x,t)$ following the pattern above; second, when $k=k_*$, we instead have
\eqn{
\psi_{k_*}(x,t)&=N_{k_*}^{-2}\sum_ja_{j,k_*}(x)\Psi_{j,k_*}(x,t)\theta_j,\\
v_{k_*}(x,t)&=C_{k_*}\mathbb P\Delta\psi_{k_*}(x,t)
}
with $\Psi_{j,k_*}(x,t)$ following the same pattern as $k\in\{1,2,\ldots, k_*-1\}$.

We sometimes write $\Psi_{j,k}^0$, $\psi_k^0$, $v^0$, and $v_k^0$ to refer to the initial ($t=0$) value of $\Psi_{j,k}$, $\psi_k$, $v$, and $v_k$, respectively.
\end{define}

\begin{remark}\label{new-remark}

Some comments on the rationale behind Definition~\ref{v-psi-etc-definition} are in order.

\begin{enumerate}[1.]
    \item The $(1-e^{-2N_{k+1}^2t})$ factor in the definition of $v_k$ reflects the activation of the $k$th scale and growth to maximum amplitude $C_k$ on time scale $N_{k+1}^{-2}$. Note that the factor is missing in the definition of $v_{k_*}$, which is not activated by instead \emph{begins} at amplitude $C_{k_*}$ at time $t=0$.
    \item Note that the mollification in the definition of $\psi_k$ is essentially negligible, the length scale being much smaller than $N_k^{-1}$. It is included only to avoid a loss of derivatives problem in the proof of Lemma~\ref{construct_coefficients_lemma}.
    \item We point out some elementary but essential consequences of Definition~\ref{v-psi-etc-definition} that will inform the computations in \S\ref{full-solution-correction-subsection}. Because of $\theta_j\cdot\eta_j=0$ and \eqref{shear-flow-mikado-property}, we have
\eqn{
\div\psi_k(x,t)=N_k^{-2}\phi_k*\sum_j(\theta_j\cdot\grad a_{j,k})(x)\Psi_{j,k}(x,t),
}
with appropriate adjustments when $k\in\{0,k_*\}$. Derivatives of $a_{j,k}$ should be thought of as lower order, so $\psi_k$ is nearly divergence-free. For the same reason, and additionally invoking \eqref{mikado-separation-disjoint-condition}, we have
\eqn{
\div(\Psi_{j,k}\theta_j)\otimes (\Psi_{j,k}\theta_j)=0
}
identically. Thus, up to similar lower order errors, $\psi_k(x,t)$ are shear flows. Finally, observe that $\Psi_{j,k}(x,t)$ solves the heat equation to leading order, with error terms involving derivatives of $\varphi_j$.
\end{enumerate}

\end{remark}

\subsubsection{The coefficients and some estimates}

For $j\in\{1,2,\ldots,6\}$ and $k\in\{1,2,\ldots,k_*\}$, define the quantity
\eqn{
B_{j,k}\coloneqq\fint_{\mathbb T^3}\Psi_{j,k}^2(x,0)dx
}
which measures the energy transfer rate of the $j$th Mikado flow at scale $k$. Using, for example, the improved H\"older's inequality in \cite[Lemma 2.1]{modena2018non} with $p=1$, we can decouple the low- and high-frequency factors in the integral to find
\eqn{
B_{j,k}&=\fint_\Td\varphi_j^2dx\fint_\Td\sin^2(x_1)dx+O(N_k^{-1}M_k).
}
By the observation \eqref{M-over-N-fact-small}, the second term can be made as small as needed by the choice of $A$. Further, by \eqref{mikado-l2-normalization} and Lemma~\ref{mikado-delta-lemma}, the leading term is exactly $1/(360\pi)-O(\epsilon_0)$ where $360\pi\approx 1130.97$. Thus, by the choice of $A$ and $\epsilon_0$, we can ensure
\eq{\label{B-sim-1}
B_{j,k}\in\Big[\frac1{1131},\frac1{1130}\Big]
}
for all $k\geq1$.

Next, define the modified symmetric gradient
\eqn{
\mathcal Df\coloneqq\frac12(\grad f+\grad f^T)-(\div f)\Id
}
for $f\in C^\infty(\Td;\Rd)$, which obeys
\eq{\label{d-property}
\div\mathcal Df=\frac12\mathbb P\Delta f=-\frac12\curl\curl f.
}

We now define the coefficients $a_{j,k}(x)$ that appear in Definition~\ref{v-psi-etc-definition}. The central point of the construction is that by imposing the identity \eqref{a-recursive-relation}, the inverse cascade initiated by $v_{k_*}^0$ will unfold in a precisely controlled way.

\begin{lem}\label{construct_coefficients_lemma}
    Let $\psi_k$, $\theta_j$, etc.\ be as in Lemma~\ref{nash-lemma} and Definition \ref{v-psi-etc-definition}. There exist choices of $a_{j,k}\in C^\infty(\mathbb T^3;\mathbb R)$ such that
    \eqn{
    a_{j,0}=-\frac{N_0}{|\eta_*|}\delta_{j,1}
    }
    with $\delta$ being the Kronecker delta, while for the remaining $k$ we have the inductive formula
    \eq{\label{a-recursive-relation}
\sum_jB_{j,k}a_{j,k}^2(x)\theta_j\otimes\theta_j=-4N_{k-1}\mathcal D\psi_{k-1}^0(x)+p_{k}\Id\quad\forall x\in\mathbb T^3
}
for all $k\in\{1,2\ldots,k_*\}$ and some constants $p_k\in\mathbb R$. Furthermore, for any choice of $n_{max}'\in\mathbb N$, $A$ can be chosen large enough so that
\begin{align}
    1\leq a_{j,k}(x)\leq 32000&\quad\forall x\in\Td\label{a-precise-upper-lower-bounds-final}\\
    \|\grad^n a_{j,k}\|_\infty\lesssim_nN_{k-1}^n&,\label{final-a-bounds}\\
    \|\grad^n\psi_k(t)\|_\infty\lesssim N_k^{-2+n}e^{-N_k^2t}&\quad\forall t\geq0\label{psi-bound-final}
\end{align}
for all $j\in\{1,\ldots,6\}$, $n=0,1,2,\ldots,n_{max}'$, and $k\geq1$.
\end{lem}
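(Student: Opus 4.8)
The plan is to construct the $a_{j,k}$ by induction on $k$, starting from the prescribed constant $a_{j,0}$ and, at the $k$-th step, \emph{solving} the pointwise identity \eqref{a-recursive-relation} for $a_{j,k}$ by means of the Nash lemma (Lemma~\ref{nash-lemma}). The key observation is that \eqref{a-recursive-relation} asks for the symmetric-matrix-valued function $x\mapsto-4N_{k-1}\mathcal D\psi_{k-1}^0(x)+p_k\Id$ to be written as $\sum_j(\text{positive})^2\,\theta_j\otimes\theta_j$, which after dividing by $p_k$ is exactly the content of \eqref{nash-lemma-span-equation} applied to $M_k(x)\coloneqq\Id-\tfrac{4N_{k-1}}{p_k}\mathcal D\psi_{k-1}^0(x)$, \emph{provided} one can guarantee $M_k(x)\in B=B(\Id,\tfrac17)$ for every $x$. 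Granting this, I would set $a_{j,k}(x)\coloneqq\sqrt{p_k/B_{j,k}}\,\Gamma_j(M_k(x))$: this is smooth and real, it satisfies \eqref{a-recursive-relation} identically since $p_kM_k(x)=-4N_{k-1}\mathcal D\psi_{k-1}^0(x)+p_k\Id$ and $\Gamma_j>0$, and since $\Gamma_j\in[\tfrac15,1]$ and $B_{j,k}\in[\tfrac1{1131},\tfrac1{1130}]$ by \eqref{B-sim-1} it obeys $\tfrac15\sqrt{1130\,p_k}\le a_{j,k}\le\sqrt{1131\,p_k}$. The exceptional levels $k\in\{0,k_*\}$ differ only in the definitions of $\psi_k$ and $v_k$ and do not affect this scheme.

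Everything then hinges on a single estimate: that $N_{k-1}\|\mathcal D\psi_{k-1}^0\|_{L^\infty}$ (with the entrywise maximum norm on matrices) is bounded by an absolute constant strictly below $\tfrac1{28}\cdot\tfrac{(32000)^2}{1131}\approx 32335$. With this I would simply fix $p_k=700000$ for every $k$; then $\tfrac{4N_{k-1}}{p_k}\|\mathcal D\psi_{k-1}^0\|_{L^\infty}<\tfrac17$, so $M_k(x)\in B$, and $1<\tfrac15\sqrt{1130\cdot 700000}\le a_{j,k}\le\sqrt{1131\cdot 700000}<32000$, which is \eqref{a-precise-upper-lower-bounds-final}. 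To prove the estimate, the case $k=1$ is immediate because $\psi_0^0=-\tfrac1{N_0|\eta_*|}\sin(x\cdot\eta_*)\tfrac{\Theta_*}{|\Theta_*|}$ is explicit and $\Theta_*\cdot\eta_*=0$ forces $N_0\|\mathcal D\psi_0^0\|_{L^\infty}\le1$. For $k\ge2$ I would expand $\psi_{k-1}^0$ from Definition~\ref{v-psi-etc-definition}: the only term in $\mathcal D\psi_{k-1}^0$ of size $\sim N_{k-1}^{-1}$ is the one where the derivative lands on the oscillation $\sin(N_{k-1}\eta_j\cdot x)$, producing $N_{k-1}^{-1}\phi_{k-1}*\sum_j a_{j,k-1}\,\varphi_j(M_{k-1}x)\cos(N_{k-1}\eta_j\cdot x)\,(\theta_j\odot\eta_j)$, and by the inductive bound \eqref{a-precise-upper-lower-bounds-final} at level $k-1$, the pairwise-disjoint supports of the functions $x\mapsto\varphi_j(M_{k-1}x)$, the estimate $\|\theta_j\odot\eta_j\|_{\max}\le\tfrac12$ (valid since $\theta_j\perp\eta_j\in\{e_1,e_2,e_3\}$), and the $L^1$-normalization of $\phi_{k-1}$, this term contributes at most $\tfrac12\cdot 32000=16000$ to $N_{k-1}\|\mathcal D\psi_{k-1}^0\|_{L^\infty}$. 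Every other contribution — the $(\div\psi_{k-1}^0)\Id$ term, which is genuinely lower order because the shear relations $\theta_j\cdot\eta_j=0$, $\theta_j\cdot\grad\varphi_j=0$ of \eqref{shear-flow-mikado-property} force $\div\psi_{k-1}^0$ to see only $\grad a_{j,k-1}$, together with the terms where a derivative hits $a_{j,k-1}$ or $\varphi_j(M_{k-1}x)$ — carries an extra factor $O\big((N_{k-2}+M_{k-1})/N_{k-1}\big)$ and is $o(1)$ as $A\to\infty$ by \eqref{final-a-bounds} at level $k-1$ and \eqref{M-over-N-fact-small}. Since $k_*$ and $n_{max}'$ are fixed before $A$, taking $A$ large kills all these errors at once and leaves $N_{k-1}\|\mathcal D\psi_{k-1}^0\|_{L^\infty}\le16000+o(1)<32335$.

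The derivative bounds \eqref{final-a-bounds} and \eqref{psi-bound-final} then propagate by a routine induction. For \eqref{final-a-bounds} I would differentiate $a_{j,k}=\sqrt{p_k/B_{j,k}}\,\Gamma_j(M_k(x))$ by the Fa\`{a} di Bruno formula: the factors $\grad^m\Gamma_j$ are controlled by \eqref{gamma-bounds} since $M_k(x)\in B$, while $\grad^mM_k=-\tfrac{4N_{k-1}}{p_k}\grad^m\mathcal D\psi_{k-1}^0$ satisfies $\|\grad^mM_k\|_\infty\lesssim_mN_{k-1}^m$ — the dominant piece being again the one with all derivatives on the oscillation, which needs only $\|a_{j,k-1}\|_\infty\le32000$, the remainder being lower order by \eqref{final-a-bounds} at level $k-1$ — so $\|\grad^na_{j,k}\|_\infty\lesssim_nN_{k-1}^n$. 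For \eqref{psi-bound-final}, convolution with $\phi_k$ is harmless and a derivative costs at most $N_{k-1}$ on $a_{j,k}$, $M_k$ on $\varphi_j(M_kx)$, and $N_k$ on $\sin(N_k\eta_j\cdot x)$, so the oscillation dominates and $\|\grad^n\psi_k(t)\|_\infty\lesssim_nN_k^{-2}\cdot N_k^n\max_j\|a_{j,k}\|_\infty\,e^{-N_k^2t}\lesssim N_k^{-2+n}e^{-N_k^2t}$, using \eqref{a-precise-upper-lower-bounds-final} and \eqref{final-a-bounds} just established at level $k$.

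I expect the genuinely delicate point to be the numerology: the induction closes only because the Nash-lemma constants $\tfrac17$, $\tfrac15$, the value $32000$ in \eqref{a-precise-upper-lower-bounds-final}, and the range of $B_{j,k}$ in \eqref{B-sim-1} are mutually compatible — the bound $16000+o(1)$ on $N_{k-1}\|\mathcal D\psi_{k-1}^0\|_{L^\infty}$ must sit strictly below $\tfrac1{28}\cdot\tfrac{(32000)^2}{1131}\approx32335$ for an admissible $p_k$ to exist, and the available slack (here the factor $\tfrac12$ from $\|\theta_j\odot\eta_j\|_{\max}\le\tfrac12$) is not large. A secondary technical nuisance is a loss of one derivative per level — controlling $\grad^na_{j,k}$ uses $\grad^{n+1}a_{j,k-1}$ through $\grad^{n+1}\mathcal D\psi_{k-1}^0$ — but this is harmless, since the recursion terminates after the fixed number $k_*$ of steps at the explicit trigonometric $\psi_0^0$, so the implicit constants, though depending on $k_*$ and $n_{max}'$, stay finite; the mollifications $\phi_k$ in Definition~\ref{v-psi-etc-definition}, negligible for all the sup-norm estimates above, are present precisely to keep this accounting under control.
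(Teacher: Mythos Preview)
Your argument is correct and follows the same overall route as the paper—define $a_{j,k}$ inductively by applying the Nash lemma to $\Id-c\,\mathcal D\psi_{k-1}^0$, bound $N_{k-1}\|\mathcal D\psi_{k-1}^0\|_\infty$ via the leading term (derivative on the oscillation, disjoint Mikado supports, $\|\eta_j\odot\theta_j\|\le\tfrac12$), and propagate derivative bounds by Fa\`a di Bruno—but with one genuine simplification. The paper takes $p_k=28N_{k-1}\|\mathcal D\psi_{k-1}^0\|_\infty$ adaptively, which forces it to prove a two-sided recurrence $c_*^{2^{-k}}\le N_k\|\mathcal D\psi_k^0\|_\infty\le 31669^{1-2^{-k}}$; the lower bound is needed both for $a_{j,k}\ge1$ and because $\|\mathcal D\psi_{k-1}^0\|_\infty$ sits in denominators in the paper's derivative estimates. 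Your fixed $p_k=700000$ sidesteps all of this: $a_{j,k}\ge\tfrac15\sqrt{1130\cdot700000}>1$ is automatic, nothing is divided by $\|\mathcal D\psi_{k-1}^0\|$, and only the upper bound $16000+o(1)<p_k/28$ is needed, so your argument is strictly shorter. On the higher derivatives, you accept a loss of one order per level and constants depending on $k_*$, whereas the paper uses the mollifier trick you mention in your last sentence (place all derivatives on $\phi_{k-1}$ to get a rudimentary bound on $\nabla^{m+1}\psi_{k-1}^0$ using only $\|a_{j,k-1}\|_\infty\le32000$, which already suffices for the optimal \eqref{psi-bound-final} since $(N_{k-1}/N_k)^{1/2}\ll1$) to obtain constants uniform in $k$; implementing that tweak would match the $\lesssim_n$ in the statement exactly, but the finite-depth induction you describe is adequate for every downstream application.
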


\begin{proof}
    With $\Gamma_j$ as in Lemma~\ref{nash-lemma}, we inductively define
    \eqn{
    a_{j,k}(x)\coloneqq \left(28B_{j,k}^{-1}N_{k-1}\|\mathcal D\psi_{k-1}^0\|_\infty\right)^\frac12\Gamma_j(\Id-\frac{\mathcal D\psi_{k-1}^0}{7\|\mathcal D\psi_{k-1}^0\|_\infty})
    }
    for $k\in\{1,\ldots,k_*\}$. We will see below that, with this definition, $\|\mathcal D\psi_{k-1}^0\|_\infty$ is strictly positive for all such $k$. Furthermore, $\Id-\frac{\mathcal D\psi_{k-1}^0}{7\|\mathcal D\psi_{k-1}^0\|_\infty}$ is a symmetric matrix in $B(\Id,1/7)$; thus $\Gamma_j$ is well-defined as it appears in the definition.
    
    From \eqref{nash-lemma-span-equation}, it is straightforward to see that this choice of $a_{j,k}$ satisfies \eqref{a-recursive-relation}. Further, using \eqref{mikado-pointwise-bounds}, \eqref{B-sim-1}, and the fact that $\Gamma_j$ maps into $[\frac15,1]$, we obtain the following precise estimate for $a_{j,k}$:
    \eq{\label{a-initial-precise-estimate}
    \big(100N_{k-1}\|\mathcal D\psi_{k-1}^0\|_\infty\big)^\frac12\leq a_{j,k}(x)\leq \big(31668N_{k-1}\|\mathcal D\psi_{k-1}^0\|_\infty\big)^\frac12
    }
    for all $x\in\Td$ and $k\in\{1,\ldots,k_*\}$. The derivative on $\psi_k^0$ can fall in three places: the coefficient $a_{j,k}$, the cutoff $\Psi_{j,k}$, and the sin factor. Expanding with the Leibniz rule and applying \eqref{mikado-pointwise-bounds}, \eqref{B-sim-1}, \eqref{gamma-bounds}, and \eqref{M-over-N-fact-small}, we obtain
    \eqn{
    \|\mathcal D\psi_k^0\|_\infty&\lesssim N_k^{-1}\|a_{j,k}\|_\infty + N_k^{-2}\|\grad a_{j,k}\|_\infty.
    }
    It is similarly straightforward to obtain
    \eq{\label{gradabound}
    \|\grad a_{j,k}\|_\infty&\lesssim N_{k-1}^\frac12\frac{\|\grad\mathcal D\psi_{k-1}^0\|_\infty}{\|\mathcal D\psi_{k-1}^0\|_\infty^{1/2}}.
    }
    Combining these with \eqref{a-initial-precise-estimate}, we arrive at the inequality
    \eq{
    \|\mathcal D\psi_k^0\|_\infty&\lesssim N_k^{-1}N_{k-1}^\frac12\|\mathcal D\psi_{k-1}^0\|_\infty^\frac12\left(1+N_k^{-1}\frac{\|\grad\mathcal D\psi_{k-1}^0\|_\infty}{\|\mathcal D\psi_{k-1}^0\|_\infty}\right)
    }
    for all $k\in\{1,2,\ldots,k_*\}$. From here we claim the more precise bound
    \eq{\label{Dpsi_upper_and_lower_bounds}
    c_*^{2^{-k}}N_k^{-1}\leq \|\mathcal D\psi_k^0\|_\infty\leq 31669^{1-2^{-k}}N_k^{-1}
    }
    for a $c_*\in(0,1]$ to be specified. We induct from $k=0$, in which case we have the exact identity
    \eqn{
    \mathcal D\psi_0^0=-N_0^{-1}\frac{\eta_*}{|\eta_*|}\odot\frac{\Theta_*}{|\Theta_*|}\cos(x\cdot\eta_*)
    }
    which leads to
    \eq{\label{k-equals-zero-case-induction}
    N_0\|\mathcal D\psi_0^0\|_\infty=\Big\|\frac{\eta_*}{|\eta_*|}\odot\frac{\Theta_*}{|\Theta_*|}\Big\|\eqcolon c_*.
    }
    Based on the fact that $\frac{\eta_*}{|\eta_*|}$ and $\frac{\Theta_*}{|\Theta_*|}$ are orthogonal unit vectors (and recalling that we measure matrices with the entry-wise maximum), it is simple to show\footnote{If $a,b$ are orthogonal unit vectors, then we compute $\sum_{i,j}(a_ib_j+b_ia_j)^2=2$; by the pigeonhole principle, there is an $(i,j)$ with $|a_ib_j+b_ia_j|/2\geq1/(3\sqrt2)$.} $c_*\in[(3\sqrt2)^{-1},1]$.
    
    To continue the induction, fix $k\in\{1,\ldots,k_*\}$ and assume \eqref{Dpsi_upper_and_lower_bounds} for all indices up to $k-1$. Letting the derivatives fall on the mollifier and applying \eqref{a-initial-precise-estimate} and \eqref{Dpsi_upper_and_lower_bounds}, we find
    \eqn{
    \|\grad\mathcal D\psi_{k-1}^0\|_\infty&\lesssim N_{k-1}^{-2}\ell_{k-1}^{-2}N_{k-2}^\frac12\|\mathcal D\psi_{k-2}^0\|_\infty^\frac12\lesssim N_{k-1}^{-2}\ell_{k-1}^{-2}.
    }
    From this, \eqref{a-initial-precise-estimate}, \eqref{gradabound}, and \eqref{Dpsi_upper_and_lower_bounds}, it follows that
    \eq{\label{a-and-grad-a-estimate}
    \|a_{j,k}\|_\infty\lesssim 1,\qquad\|\grad a_{j,k}\|_\infty\lesssim N_{k-1}^{-1}\ell_{k-1}^{-2}.
    }
    Toward showing \eqref{Dpsi_upper_and_lower_bounds}, we identify the leading contribution to $N_k\mathcal D\psi_k^0$:
    \eq{\label{ndpsi-expansion}
    N_k\mathcal D\psi_k^0=\sum_ja_{j,k}(x)\varphi_j(M_kx)\cos(N_k\eta_j\cdot x)\eta_j\odot\theta_j+E_1+E_2
    }
    where the errors
    \eqn{
    E_1&=(\phi_k-\delta)*\sum_ja_{j,k}(x)\varphi_j(M_kx)\cos(N_k\eta_j\cdot x)\eta_j\odot\theta_j
    }
    and
    \eqn{
    E_2&=N_k^{-2}\mathcal D\phi_k*\sum_j\theta_j\,\odot'\,\grad(a_{j,k}\varphi_j(M_kx))\sin(N_k\eta_j\cdot x)
    }
    capture the effect of removing the mollifier (with $\delta$ the Dirac delta) and terms with lower frequency derivatives, respectively. Note that we have used the temporary notation $a\,\odot'\,b\coloneqq \frac12(a\otimes b+b\otimes a)-(a\cdot b)\Id$ for a modified symmetric tensor product.
    
    By standard mollifier estimates, \eqref{mikado-pointwise-bounds}, \eqref{M-over-N-fact-small}, \eqref{Dpsi_upper_and_lower_bounds}, and \eqref{a-and-grad-a-estimate},
    \eqn{
    \|E_1\|_\infty&\lesssim \ell_k\sum_j\|\grad(a_{j,k}\varphi_j(M_kx)\cos(N_k\eta_j\cdot x))\|_\infty\\
    &\lesssim \max_j\,\ell_k(N_k\|a_{j,k}\|_\infty+\|\grad a_{j,k}\|_\infty)\\
    &\lesssim \ell_k(N_k+N_{k-1}^{-1}\ell_{k-1}^{-2})\lesssim (N_k/N_{k+1})^\frac14
    }
    and, similarly,
    \eqn{
    \|E_2\|_\infty&\lesssim \ell_k^{-1}N_k^{-2}(N_{k-1}^{-1}\ell_{k-1}^{-2}+M_k)\lesssim \ell_k^{-1}N_k^{-2}M_k.
    }
    In the last inequality, we have used $\gamma>\frac{1+b}{2b}$ to infer that the second term dominates. For the same reason, we see that the upper bound on $E_2$ dominates that on $E_1$.
    
    Now we estimate the leading term. By \eqref{mikado-pointwise-bounds}, \eqref{mikado-non-degeneracy-equals-one-1}, and \eqref{a-initial-precise-estimate},
    \eqn{
    \|a_{j,k}(x)\varphi_j(M_kx)\cos(N_k\eta_j\cdot x)\eta_j\odot\theta_j\|_\infty\in\big[\frac2{5}\inf_\Td|a_{j,k}|,\sup_\Td|a_{j,k}|\big],
    }
    the $2/5$ factor being the explicitly computable value of $\|\eta_j\odot\theta_j\|$ for all $j$. Combining this with \eqref{ndpsi-expansion} and the estimates for $E_1$ and $E_2$,
    \eqn{
    N_k\|\mathcal D\psi_k^0\|_\infty&\in\big[\frac3{10}\inf_{j,x}|a_{j,k}| ,\sup_{j,x}|a_{j,k}|\big] + O(M_kN_k^{-\frac54}N_{k+1}^\frac14)\\
    &\subset[(5N_{k-1}\|\mathcal D\psi_{k-1}^0\|_\infty)^{1/2},(31668N_{k-1}\|\mathcal D\psi_{k-1}^0\|_\infty)^{1/2}]+O(A^{-(\frac{5-b}4-\gamma)})
    }
    where the notation $[a,b]+O(y)$ is an abbreviation for the interval $[a-O(y),b+O(y)]$. Note that by the inductive hypothesis \eqref{Dpsi_upper_and_lower_bounds}, $(N_{k-1}\|\mathcal D\psi_{k-1}^0\|_\infty)^{1/2}$ is uniformly bounded from above and below; thus by taking $A$ sufficiently large, we can make the error much smaller to conclude
    \eqn{
 N_k\|\mathcal D\psi_k^0\|_\infty\in[(N_{k-1}\|\mathcal D\psi_{k-1}^0\|_\infty)^\frac12,(31669N_{k-1}\|\mathcal D\psi_{k-1}^0\|_\infty)^\frac12].
    }
    Iterating from the initial bound \eqref{k-equals-zero-case-induction} yields \eqref{Dpsi_upper_and_lower_bounds}, completing the induction. Combining with \eqref{a-initial-precise-estimate}, the claim \eqref{a-precise-upper-lower-bounds-final} is immediate.

    Having established \eqref{a-precise-upper-lower-bounds-final} and \eqref{Dpsi_upper_and_lower_bounds}, we can continue the procedure to bootstrap the higher order bounds in \eqref{final-a-bounds}--\eqref{psi-bound-final}: one obtains a rudimentary estimate on $\grad^{m+1}\psi_{k-1}^0$ by placing derivatives on the mollifier, then uses it to obtain a rudimentary estimate on $\grad^ma_{j,k}$, which in turn leads to the optimal estimate on $\grad^m\psi_k^0$. The key point is that when repeatedly differentiating $\psi_k^0$, the leading contribution occurs when all the derivatives fall on $\sin(N_k\eta_j\cdot x)$ (even using just the ``rudimentary bound'' on $a_{j,k}$). 
    
    Note that we insist $n$ is bounded in \eqref{final-a-bounds}--\eqref{psi-bound-final} because the errors from derivatives hitting $a_{j,k}$ grow with $n$, and $A$ has to be chosen large to compensate.
\end{proof}

Collecting the definitions of the building blocks and coefficients, we can derive some straightforward estimates on the principal part $v$ of the solution.

\begin{lem}\label{v-estimates-lemma}
    With $v_k$, $v$, and $n_{max}'$ as in Definition~\ref{v-psi-etc-definition}, we have
    \eq{\label{vk_bounds}
    \|\grad^nv_k(t)\|_\infty&\lesssim_n \left(\frac{|\Theta_*|}{|\eta_*|}\right)^{2^{-k}}N_k^{1+n}e^{-N_k^2t}\quad\forall t\geq0
    }
   for all $k\geq0$ and $n\leq n_{max}'$, and
   \eq{\label{critical_v_bounds}
    \|t^{\frac12-\frac1p}v\|_{L_t^p([0,\infty);L_x^\infty(\Td))}\lesssim_{\Theta_*,k_*}1
    }
    for all $p\in[1,\infty]$.
\end{lem}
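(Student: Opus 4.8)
The plan is to read off the pointwise bound \eqref{vk_bounds} directly from the estimate \eqref{psi-bound-final} on the potentials $\psi_k$, and then to deduce \eqref{critical_v_bounds} by summing over the finitely many scales $k$ a family of one-parameter time integrals, each of which is invariant under the parabolic rescaling $t\mapsto N_k^{-2}t$ and hence bounded by a constant that does not involve $A$.

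For \eqref{vk_bounds}, I would first rewrite the building block using \eqref{d-property}, i.e.\ $\mathbb P\Delta=-\curl\curl$, so that $v_k=-C_k(1-e^{-2N_{k+1}^2t})\curl\curl\psi_k$ for $k\in\{0,1,\dots,k_*-1\}$ and $v_{k_*}=-C_{k_*}\curl\curl\psi_{k_*}$; this replaces the Leray projection by a purely differential operator and avoids invoking any mapping property of $\mathbb P$ on $L^\infty$. Since $0\le 1-e^{-2N_{k+1}^2t}\le1$ and $\curl\curl$ costs exactly two derivatives, for $k\ge1$ this gives $\|\grad^nv_k(t)\|_\infty\lesssim_nC_k\|\grad^{n+2}\psi_k(t)\|_\infty\lesssim_nC_kN_k^ne^{-N_k^2t}$ straight from \eqref{psi-bound-final} (valid for all $k\ge1$, in particular $k_*$), and substituting $C_k=N_k(|\Theta_*|/|\eta_*|)^{2^{-k}}$ from \eqref{C_formula} yields \eqref{vk_bounds}. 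The index $k=0$ is exceptional only in bookkeeping: plugging $a_{j,0}=-\tfrac{N_0}{|\eta_*|}\delta_{j,1}$ into Definition~\ref{v-psi-etc-definition} collapses $\psi_0$ to $-\tfrac1{N_0|\eta_*|}\sin(\eta_*\cdot x)e^{-|\eta_*|^2t}\tfrac{\Theta_*}{|\Theta_*|}$, which is divergence-free because $\eta_*\cdot\Theta_*=0$, so $v_0=(1-e^{-2N_1^2t})\Theta_*\sin(\eta_*\cdot x)e^{-|\eta_*|^2t}$ and $\|\grad^nv_0(t)\|_\infty\lesssim_n|\Theta_*|\,|\eta_*|^ne^{-|\eta_*|^2t}$; since $N_0=\lceil|\eta_*|\rceil\sim|\eta_*|$, this is the $k=0$ instance of \eqref{vk_bounds}, the decay rate $|\eta_*|^2$ being comparable to $N_0^2$.

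With \eqref{vk_bounds} in hand, \eqref{critical_v_bounds} is a short computation. Because $2^{-k}\le1$ one has $(|\Theta_*|/|\eta_*|)^{2^{-k}}\le\max(1,|\Theta_*|/|\eta_*|)\lesssim_{\Theta_*}1$, so \eqref{vk_bounds} with $n=0$ gives $\|v_k(t)\|_\infty\lesssim_{\Theta_*}N_ke^{-\tilde N_k^2t}$ uniformly in $k$, with $\tilde N_k=N_k$ for $k\ge1$ and $\tilde N_0=|\eta_*|\sim N_0$. By Minkowski's inequality in $L^p_t$ it then suffices to bound each $\|t^{\frac12-\frac1p}N_ke^{-\tilde N_k^2t}\|_{L^p_t(0,\infty)}$; substituting $s=\tilde N_k^2t$ turns this into $\tfrac{N_k}{\tilde N_k}\big(p^{-p/2}\Gamma(p/2)\big)^{1/p}$ for $p<\infty$ and $\tfrac{N_k}{\tilde N_k}\sup_{s>0}s^{1/2}e^{-s}$ for $p=\infty$; in both cases $N_k/\tilde N_k\lesssim1$ and the remaining factor is finite, bounded uniformly over $p\in[1,\infty]$, and---this is the point---independent of $N_k$, hence of $A$. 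Summing the $k_*+1$ contributions gives $\|t^{\frac12-\frac1p}v\|_{L^p_tL^\infty_x}\lesssim_{\Theta_*}(k_*+1)\lesssim_{\Theta_*,k_*}1$.

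I expect no genuine obstacle; the one thing to watch is that the constant in \eqref{critical_v_bounds} must not depend on $A$ (equivalently, on the $N_k$), which is exactly the scaling-criticality of $L^p_tL^\infty_x$ exploited through the substitution $s=\tilde N_k^2t$---estimating $\|v_k\|_\infty$ and integrating crudely in $t$ would instead leave uncancelled powers of $N_k$. The exceptional indices $k\in\{0,k_*\}$ are pure bookkeeping, and the rewriting $\mathbb P\Delta=-\curl\curl$ removes any need to quote $L^\infty$ bounds for the Leray projection.
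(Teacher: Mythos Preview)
Your proposal is correct and follows essentially the same approach as the paper: the paper likewise invokes $\mathbb P\Delta=-\curl\curl$ together with \eqref{C_formula} and \eqref{psi-bound-final} for \eqref{vk_bounds}, and then bounds $\|t^{\frac12-\frac1p}v\|_{L^p_tL^\infty_x}\lesssim_{\Theta_*}\sum_{k=0}^{k_*}\|t^{\frac12-\frac1p}N_ke^{-N_k^2t}\|_{L^p_t}\lesssim k_*$ by the same scaling argument you spell out. Your treatment is in fact slightly more careful about the $k=0$ case (distinguishing $|\eta_*|$ from $N_0$), which the paper glosses over.
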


\begin{proof}
    The proof of \eqref{vk_bounds} is immediate from \eqref{C_formula} and \eqref{psi-bound-final}, recalling that $\mathbb P\Delta=-\curl\curl$ so we can easily distribute the derivatives using the Leibniz rule.
    
    Now we turn to \eqref{critical_v_bounds} and use \eqref{vk_bounds} to very crudely bound
    \eqn{
    \|t^{\frac12-\frac1p}v\|_{L_t^pL_x^\infty}&\lesssim_{\Theta_*}\sum_{k=0}^{k_*}\|t^{\frac12-\frac1p}N_ke^{-N_k^2t}\|_{L^p_t}\lesssim k_*,
    }
    each summand being $O(1)$.
\end{proof}

\subsection{Construction of the full solution}\label{full-solution-correction-subsection}

In \S\ref{principal-construction-section}, we defined only the leading part $v(x,t)$ of the solution. What remains is to show that it solves \eqref{nse} up to a small error, and can therefore be perturbed into an exact solution $u(x,t)$.

\begin{define}\label{full-definition}
Define the initial data
\eqn{
u^0(x)\coloneqq v(x,0)\in C^\infty(\Td)
}
where $v$ is as constructed in \S\ref{principal-construction-section}. Then let $u$ be the local-in-time smooth solution of \eqref{nse} from $u^0$. Let $T_*\in(0,\infty]$ be the maximum time of existence.

For $t\in[0,T_*)$, define
\eqn{
w(t,x)=u(t,x)-v(t,x).
}
\end{define}

By virtue of its definition, $w$ will solve the Navier--Stokes equations with a force and additional linear terms coming from $v$. Long-time regularity and smallness of $w$ requires the force to be designed carefully so that it admits sharp subcritical estimates, which we prove in the next proposition.

\begin{proposition}\label{f-prop}
    Let $\alpha,\beta\in(0,\frac12)$ be sufficiently small, depending only on $b$ and $\gamma$. For any $n_{max}''\in\mathbb N$ and all sufficiently large $A=A(n_{max}'')$, there exists a force $f\in C^\infty([0,\infty)\times\Td;\mathbb R^{3\times3})$ admitting the estimates
    \eq{\label{final-f-bound}
    t^{\frac12(2+n+\alpha)}\|\grad^nf(t)\|_{C^\alpha}&\lesssim_{\Theta_*,n} A^{-\beta}(N_0^2t)^{\frac\alpha2+\beta}e^{-N_0^2t}\qquad\forall t>0
    }
    for $n=0,1,2,\ldots,n_{max}''$, such that $v$ obeys
    \eq{\label{v_equation}
    \d_tv-\Delta v+\mathbb P\div v\otimes v=\mathbb P\div f.
    }
\end{proposition}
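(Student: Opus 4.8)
The plan is to \emph{define} $f$ directly from \eqref{v_equation}: one computes the residual $\mathcal R\coloneqq\d_tv-\Delta v+\mathbb P\div(v\otimes v)$, decomposes it into explicit pieces, writes each piece as $\mathbb P\div$ of an $\mathbb R^{3\times3}$-valued function, and lets $f$ be the sum of those tensors. Smoothness of $f$ on $[0,\infty)\times\Td$ is automatic since $v$ is built from exponentials in $t$ and smooth spatial profiles, and the only nonlocal operation used, $\grad\Delta^{-1}$, is smoothing. Writing $v=\sum_{k=0}^{k_*}v_k$ with $v_k=C_kg_k(t)\mathbb P\Delta\psi_k$, where $g_k(t)=1-e^{-2N_{k+1}^2t}$ for $k<k_*$ and $g_{k_*}\equiv1$, the residual splits into the linear residuals $\d_tv_k-\Delta v_k$, the diagonal interactions $\mathbb P\div(v_k\otimes v_k)$, and the off-diagonal interactions $2\mathbb P\div(v_k\odot v_{k'})$, $k\ne k'$. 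The crux is that, by the inductive identity \eqref{a-recursive-relation} together with the recursion \eqref{C_relation}, the leading part of the linear residual at scale $k-1$ exactly cancels the low-oscillation part of $\mathbb P\div(v_k\otimes v_k)$, so that only small ``mismatch'' terms survive.

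For the linear part, since the only time dependence of $\psi_k$ is through the factor $e^{-N_k^2t}$ in $\Psi_{j,k}$, we have $\d_t\psi_k=-N_k^2\psi_k$, while $\Delta\psi_k=-N_k^2\psi_k+R_k$ where $R_k$ gathers the terms in which a derivative falls on $a_{j,k}$, on $\varphi_j(M_k\cdot)$, or on the mollifier $\phi_k$; a short computation then gives $\d_tv_k-\Delta v_k=C_kg_k'\mathbb P\Delta\psi_k-C_kg_k\mathbb P\Delta R_k$, the first term being the ``activation'' $2N_{k+1}^2e^{-2N_{k+1}^2t}C_k\mathbb P\Delta\psi_k$. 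On the other hand, using the disjoint supports \eqref{mikado-separation-disjoint-condition} to kill cross terms, $v_k\otimes v_k=C_k^2g_k^2\sum_ja_{j,k}^2\Psi_{j,k}^2\,\theta_j\otimes\theta_j$ plus errors coming from the correction terms in $\mathbb P\Delta\psi_k$; writing $\Psi_{j,k}^2=\varphi_j^2(M_k\cdot)\big(\tfrac12-\tfrac12\cos(2N_k\eta_j\cdot x)\big)e^{-2N_k^2t}$ and then $\tfrac12\varphi_j^2(M_k\cdot)=B_{j,k}+\big(\tfrac12\varphi_j^2(M_k\cdot)-B_{j,k}\big)$, the principal piece is $C_k^2g_k^2e^{-2N_k^2t}\sum_jB_{j,k}a_{j,k}^2\theta_j\otimes\theta_j=C_k^2g_k^2e^{-2N_k^2t}\big(-4N_{k-1}\mathcal D\psi_{k-1}^0+p_k\Id\big)$ by \eqref{a-recursive-relation}. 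Applying $\mathbb P\div$, using $\div\mathcal D=\tfrac12\mathbb P\Delta$ from \eqref{d-property}, $\mathbb P\div(p_k\Id)=0$, $\psi_{k-1}^0=e^{N_{k-1}^2t}\psi_{k-1}(t)$, and the identity $2N_{k-1}C_k^2=2N_k^2C_{k-1}$ from \eqref{C_relation}, the activation term of $v_{k-1}$ and this principal piece collapse to the single surviving term $2N_k^2C_{k-1}\big(e^{-N_{k-1}^2t}-g_k(t)^2\big)e^{-2N_k^2t}\,\mathbb P\Delta\psi_{k-1}^0$, whose time factor is small: $|e^{-N_{k-1}^2t}-g_k^2|\lesssim N_{k-1}^2t+e^{-2N_{k+1}^2t}$. (At $k=k_*$ one has $g_{k_*}\equiv1$, which simply replaces $g_{k_*}^2$ by $1$; at $k=0$ the self-interaction $v_0\otimes v_0$ is identically divergence-free because $\Theta_*\cdot\eta_*=0$, so only the linear residual of $v_0$ enters, and it is cancelled by the scale-$1$ nonlinearity.)

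Consequently $f$ is a finite sum over $k$ of: (a) the mismatch tensors above, written as $2N_k^2C_{k-1}(\cdots)\grad\psi_{k-1}^0$, which is a potential for $\mathbb P\Delta\psi_{k-1}^0$; (b) the linear corrections $-C_kg_k\grad R_k$, potentials for $\mathbb P\Delta R_k$; (c) the high-oscillation self-interactions $-\tfrac12C_k^2g_k^2e^{-2N_k^2t}\sum_ja_{j,k}^2\varphi_j^2(M_k\cdot)\cos(2N_k\eta_j\cdot x)\theta_j\otimes\theta_j$; (d) the oscillation error $C_k^2g_k^2e^{-2N_k^2t}\sum_ja_{j,k}^2\big(\tfrac12\varphi_j^2(M_k\cdot)-B_{j,k}\big)\theta_j\otimes\theta_j$; (e) the off-diagonal products $v_k\otimes v_{k'}$, $k\ne k'$; and (f) negligible contributions from the mollifiers $\phi_k$, from the non-solenoidal part of $\psi_k$ (the $\grad\div\psi_k$ defect in $\mathbb P\Delta\psi_k$, controlled by Remark~\ref{new-remark}), and from the $O(N_k^{-1}M_k)$ defect in $B_{j,k}$. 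Pieces (a), (b), (d), (f) are already explicit tensors. For (c) and (e) one applies $\mathbb P\div$ and is left with a divergence-free vector field $G$, which we restore to a tensor via $G=\mathbb P\div(\grad\Delta^{-1}G)$, gaining a derivative; the essential gain is that in $\div(v_k\otimes v_k)$ and in $\div$ of the tensor in (d), the highest-frequency contribution vanishes because $\eta_j\cdot\theta_j=0$ and $\theta_j\cdot\grad\varphi_j=0$, so only the slowly varying factors $a_{j,k}$ and $\varphi_j(M_k\cdot)$ get differentiated, and similarly in (e) only the larger of the two oscillations $N_k,N_{k'}$ matters.

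What remains is purely quantitative: estimate $t^{\frac12(2+n+\alpha)}\|\grad^n(\cdot)\|_{C^\alpha}$ for each of these pieces. Using Lemma~\ref{construct_coefficients_lemma} with a larger derivative budget $n_{max}'>n_{max}''$ — in particular $\|\grad^m\psi_k(t)\|_\infty\lesssim N_k^{-2+m}e^{-N_k^2t}$, $\|\grad^ma_{j,k}\|_\infty\lesssim_mN_{k-1}^m$, $1\le a_{j,k}\lesssim1$, and $C_k\lesssim_{\Theta_*}N_k$ — together with mollifier estimates of the $E_1,E_2$ type and boundedness of $\mathbb P$ on frequency-localized $C^\alpha$, one checks that each piece is concentrated at $t\sim N_k^{-2}$ (or at $t\sim N_{k+1}^{-2}$ for the $e^{-2N_{k+1}^2t}$ part of the mismatch) and obeys the target with a positive power of $A$ to spare; summing the geometric series in $k$ then yields \eqref{final-f-bound}. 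This last step, although elementary, is the main obstacle, both because of the sheer number of error terms and because several exponents of $A$ are borderline at $n=0$: for instance the mismatch (a) requires a condition such as $b^{k-1}(b-1)(2+\alpha)\ge\beta+(b^k-1)(\alpha+2\beta)$, the high-oscillation term (c) requires $(1-\gamma)b^k\ge\beta+(b^k-1)(\alpha+2\beta)$, and the oscillation error (d), after the inverse-divergence gain, requires essentially $b(1-\gamma)>b-1$ up to an $O(\alpha+\beta)$ loss — which is precisely the hypothesis $\gamma>\tfrac{b+1}{2b}$; the conditions $\gamma<\tfrac{5-b}4$ and $b\in(1,2)$ enter through the remaining pieces. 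Taking $\alpha,\beta$ small in terms of $b,\gamma$ makes all thresholds hold simultaneously for every $n$, and then $A$ large defeats the $n$-dependent implicit constants up to $n=n_{max}''$.
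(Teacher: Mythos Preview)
Your strategy is the paper's strategy, and most of the bookkeeping is right. But there is a real gap in your handling of piece~(d). You explicitly list $C_k^2g_k^2e^{-2N_k^2t}\sum_ja_{j,k}^2\big(\tfrac12\varphi_j^2(M_k\cdot)-B_{j,k}\big)\theta_j\otimes\theta_j$ among the terms that are ``already explicit tensors'' and receive no inverse-divergence processing. That cannot work: this tensor has $L^\infty$ size $\sim C_k^2\sim_{\Theta_*}N_k^2$ (the factor $\tfrac12\varphi_j^2-B_{j,k}$ is $O(1)$ pointwise, not small), so at $t\sim N_k^{-2}$ one gets $t^{1+\alpha/2}\|\cdot\|_{C^\alpha}\sim_{\Theta_*}1$ with \emph{no} negative power of $A$, and \eqref{final-f-bound} fails. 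The paper avoids this by never separating (c) from (d): it writes both together as $R_k^{high}=C_k^2\,\mathcal R\!\sum_j(\Psi_{j,k}^2-B_{j,k}e^{-2N_k^2t})\div(a_{j,k}^2\theta_j\otimes\theta_j)$, where $\mathcal R$ is an explicit right inverse of $\div$, and then applies a stationary-phase bound for $\mathcal R(b(x)e^{i\lambda\xi\cdot x})$. On the (d) part the oscillation scale is $M_k$ (from $\varphi_j^2(M_k\cdot)-\text{const}$), so $\mathcal R$ gains $M_k^{-1}$; combined with the $N_{k-1}$ cost from $\div(a_{j,k}^2\theta_j\otimes\theta_j)$ this produces the factor $N_{k-1}/M_k\sim A^{-b^{k-1}(\gamma b-1)}$, which is what makes the estimate close.

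Your later remark that ``(d), after the inverse-divergence gain, requires essentially $b(1-\gamma)>b-1$\dots which is precisely the hypothesis $\gamma>\tfrac{b+1}{2b}$'' suggests you are aware (d) needs this step, but the exposition contradicts itself, and the algebra is wrong on both counts: $b(1-\gamma)>b-1$ is equivalent to $\gamma<1/b$, which is the \emph{opposite} of what is needed, and neither inequality is the same as $\gamma>\tfrac{b+1}{2b}$. The correct threshold coming from (d) is $\gamma b>1$, which the standing hypothesis $\gamma>\tfrac{b+1}{2b}$ comfortably implies. Fix: either group (c) and (d) together and apply an anti-divergence with a stationary-phase estimate as the paper does, or apply your $\grad\Delta^{-1}\mathbb P\div$ trick to (d) as well and carry out the frequency-localized gain carefully (a bare ``$\grad\Delta^{-1}$ gains a derivative'' is not enough here---you need to see the $M_k^{-1}$).
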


\begin{proof}
We decompose $v_k=v_k^p+v_k^e$ where
\eqn{
v_k^p(x,t)\coloneqq -C_k\sum_ja_{j,k}(x)\Psi_{j,k}(x,t)\theta_j
}
with the modification that $\theta_j$ is replaced by $\Theta_*/|\Theta_*|$ when $k=0$. Further, for $k\in\{1,\ldots,k_*\}$, we have the decomposition
\eqn{
\mathbb P\div v_k^p\otimes v_k^p=\mathbb P\div(R_k^{low}+R_k^{high})
}
where
\eqn{
R_k^{low}&=C_k^2\sum_ja_{j,k}^2B_{j,k}\theta_j\otimes\theta_je^{-2N_k^2t}
}
and
\eqn{
R_k^{high}&=C_k^2\mathcal R\sum_j(\Psi_{j,k}^2-B_{j,k}e^{-2N_k^2t})\div(a_{j,k}^2\theta_j\otimes\theta_j),
}
with $\mathcal R$ the anti-divergence operator defined in coordinates as $(\mathcal RV)_{ij}\coloneqq\mathcal R_{ijk}V_k$, where
\eqn{
\mathcal R_{ijk}=-\frac12\Delta^{-2}\d^3_{ijk}-\frac1{2}\Delta^{-1}\delta_{ij}\d_k+\Delta^{-1}\delta_{jk}\d_i+\Delta^{-1}\delta_{ik}\d_j.
}
To attain this decomposition, we have used two essential features of the Mikado flows. First, in the expansion of the product $v_k^p\otimes v_k^p$, there are no cross terms between distinct $j$'s because of the spatial separation between the supports, as guaranteed by \eqref{mikado-separation-disjoint-condition}. Second, the derivative identically vanishes when falling on $\Psi_{j,k}^2$ because of the shear flow property \eqref{shear-flow-mikado-property} as well as the fact that $\theta_j\cdot\eta_j=0$. (See Remark~\ref{new-remark}.)

The operator $\mathcal R$ first appeared (we believe) in \cite{de2009euler} and acts as a right-inverse of $\div$ for zero-average symmetric tensors:
\eq{\label{R-property}\div\mathcal R V(x)=V(x) -\fint_\Td V(y)dy,}
for any smooth vector field $V$.

Incrementing $k$ to $k+1$, we calculate using \eqref{a-recursive-relation} and \eqref{d-property} that
\eqn{
\mathbb P\div R_{k+1}^{low}&=-2C_{k+1}^2N_k\mathbb P\Delta\psi_k^0e^{-2N_{k+1}^2t}.
}
Next, we see from the Definition~\ref{v-psi-etc-definition} that
\eqn{
(\d_t-\Delta)v_k&=2C_kN_{k+1}^2e^{-2N_{k+1}^2t}\mathbb P\Delta\psi_k\mathbf1_{k\neq k_*}+\mathbb P\div E_{1,k}
}
where $E_{1,k}$ is the error from $-\Delta$ falling on the lower frequency factors, and from removing the mollifier. Explicitly, we have $E_{1,0}=0$ when $k=0$ (since $a_{j,0}$ are constant and there is no mollification);
\eqn{
E_{1,k}&=C_kN_k^{-2}(1-e^{-2N_{k+1}^2t})e^{-N_k^2t}\\
&\qquad\times\phi_k*\mathcal R\sum_j\sum_{\substack{|\alpha|+|\beta|=4\\|\beta|\neq4}}c_{\alpha,\beta}\d^\alpha (a_{j,k}\varphi_j(M_k x))\d^\beta(\sin(N_k\eta_j\cdot x))\theta_j\\
&\quad+2C_kN_{k+1}^2e^{-2N_{k+1}^2t}(\phi_k-\delta)*\mathcal D\psi_k
}
for $k\in\{1,2,\ldots,k_*-1\}$; and
\eqn{
E_{1,k_*}&=C_{k_*}N_{k_*}^{-2}e^{-N_{k_*}^2t}\mathcal R\sum_j\sum_{\substack{|\alpha|+|\beta|=4\\|\beta|\neq4}}c_{\alpha,\beta}\d^\alpha (a_{j,{k_*}}\varphi_j(M_{k_*} x))\d^\beta(\sin(N_{k_*}\eta_j\cdot x))\theta_j
}
for $k=k_*$. Here, $c_{\alpha,\beta}$ are combinatorial constants coming from expanding $-\Delta\mathbb P\Delta$ over the products. Note that, crucially, the terms with $|\beta|=4$ belong to the leading part which satisfies the heat equation exactly, so are not included in $E_{1,k}$.

Now we take advantage of the key cancellation, which is the culmination of the careful construction in Lemma~\ref{construct_coefficients_lemma}. In light of the relationship \eqref{C_relation} between $C_k$ and $C_{k+1}$, it follows that for $k=0,1,\ldots, k_*-1$, $\mathbb P\div R_{k+1}^{low}$ nearly cancels with the leading term in $(\d_t-\Delta)v_k$ leading to
\eqn{
(\d_t-\Delta)v_k+\mathbb P\div v_{k+1}^p\otimes v_{k+1}^p&=\mathbb P\div (E_{1,k}+E_{2,k}+R_{k+1}^{high}).
}
The new error term,
\eqn{
E_{2,k}=-C_{k+1}^2N_k(1-e^{-N_{k}^2t})e^{-2N_{k+1}^2t}\mathcal D\psi_k,
}
reflects the fact that $(\d_t-\Delta)v_k$ and $\mathbb P\div R_{k+1}^{low}$ differ by a factor of $e^{-N_k^2t}$.

To estimate $E_{1,k}$, we recall the following stationary phase estimate for $\mathcal R$; see for instance \cite[Corollary 5.3]{de2009euler}:
\eq{\label{R-stationary-phase}
\|\mathcal R(b(x)e^{i\lambda \xi\cdot x})\|_{C^\alpha}&\lesssim_{m,\alpha} \lambda^{-1+\alpha}\|b\|_\infty+\lambda^{-m+\alpha}\|\grad^mb\|_\infty+\lambda^{-m}\|\grad^mb\|_{C^\alpha}
}
for any $\xi\in\mathbb Z^3\setminus0$, $\lambda>0$, and smooth vector field $b$ on $\Td$. Further, it is clear from its definition as a multiplier that $\grad\mathcal R$ is a Calder\'on--Zygmund operator so it is bounded on $C^{n,\alpha}$ for $\alpha\in(0,1)$. 

With \eqref{R-stationary-phase} in hand, we have
\begin{equation}\begin{aligned}\label{E1_est}
E_{1,0}&\equiv0,\\
\|\grad^nE_{1,k}\|_{C^\alpha}
&\lesssim_{\Theta_*,n} (N_k^{1+n+\alpha}M_k+N_k^{2+n+\alpha}M_k\left(\frac{M_k}{N_k}\right)^m)e^{-N_k^2t}\\
&\qquad+\ell_kN_k^{1+n}N_{k+1}^2e^{-N_{k+1}^2t}\qquad\forall k\in\{1,2,\ldots,k_*\}
\end{aligned}\end{equation}
using \eqref{C_formula}, \eqref{final-a-bounds} with a sufficiently large choice of $n_{max}'$, \eqref{mikado-pointwise-bounds}, and \eqref{psi-bound-final}. Recalling \eqref{M-over-N-fact-small}, we can choose $m$ large depending on $\gamma$ to make $N_k^{2+\alpha}M_k(M_k/N_k)$ negligible compared to the first term on the right.

Next, using the pointwise bound $(1-e^{-N_k^2t})e^{-N_{k+1}^2t}\leq N_k^2N_{k+1}^{-2}$ along with \eqref{C_formula} and \eqref{psi-bound-final}, we estimate
\eq{\label{E2_est}
\|\grad^nE_{2,k}\|_\infty&\lesssim_{\Theta_*,n} N_{k}^{2+n}e^{-N_{k+1}^2t}.
}
Now consider the high frequency part of the interaction. We write
\eqn{
R_k^{high}&=C_k^2e^{-2N_k^2t}\sum_j\sum_{\xi\in\mathbb Z^3}\mathcal F((\Psi_{j,k}^0)^2-B_{j,k})(\xi)\mathcal R\left(e^{ix\cdot\xi}\div(a_{j,k}^2\theta_j\otimes\theta_j)\right)
}
and observe that $\mathcal F((\Psi_{j,k}^0)^2-B_{j,k})(\xi)=\mathcal F_x\Big(\varphi_j^2(x)\sin^2(M_k^{-1}N_k\eta_j\cdot x)\Big)(\xi/M_k)$ when $\xi\in M_k\mathbb Z^3\setminus0$ and vanishes otherwise. Using additionally that $\sin^2(M_k^{-1}N_kx\cdot\eta_j)$ is supported only at the three frequencies $\xi=0$ and $\xi=\pm 2M_k^{-1}N_k\eta_j$, we apply \eqref{C_formula} and \eqref{R-stationary-phase} to find
\eqn{
\|R_k^{high}\|_{C^\alpha}&\lesssim C_k^2e^{-2N_k^2t}\sum_j\sum_{\xi\in\mathbb Z^3\setminus0}\Big((|\widehat{\varphi_j^2}(\xi)|+|\widehat{\varphi_j^2}(\xi+2\frac{N_k}{M_k}\eta_j)|+|\widehat{\varphi_j^2}(\xi-2\frac{N_k}{M_k}\eta_j)|)\\
&\qquad\times\|\mathcal R\left(e^{iM_k x\cdot\xi}\div(a_{j,k}^2\theta_j\otimes\theta_j)\right)\|_{C^\alpha}\Big)\\
&\lesssim_{\Theta_*,m} N_k^{2}e^{-2N_k^2t}\sup_j\sum_{\xi\in\mathbb Z^3\setminus0}\Big((\langle\xi\rangle^{-5}+\langle\xi+2\frac{N_k}{M_k}\eta_j\rangle^{-5}+\langle\xi-2\frac{N_k}{M_k}\eta_j\rangle^{-5})\\
&\qquad\times (M_k^{-1+\alpha}N_{k-1}+M_k^{-1+\alpha}N_k^{-m}N_{k-1}^{1+m}+M_k^{-1}N_k^{-m}N_{k-1}^{1+\alpha+m})\Big)\\
&\lesssim N_k^{2}M_k^{-1+\alpha}N_{k-1}e^{-2N_k^2t},
}
having once again chosen $m$ large enough to suppress the extra terms. Note that the bounds on $\widehat{\varphi_j^2}$ follow from \eqref{mikado-pointwise-bounds}.

Similarly, we estimate the derivatives of $R_k^{high}$ using that $\grad\mathcal R$ is a Calder\'on--Zygmund operator:
\eqn{
\|\grad^n R_k^{high}\|_{C^\alpha}&\lesssim C_k^2e^{-2N_k^2t}\sum_j\sum_{\xi\in\mathbb Z^3}\Big((|\widehat{\varphi_j^2}(\xi)|+|\widehat{\varphi_j^2}(\xi+2\frac{N_k}{M_k}\eta_j)|+|\widehat{\varphi_j^2}(\xi-2\frac{N_k}{M_k}\eta_j)|)\\
&\qquad\times\|e^{iM_k x\cdot\xi}\div(a_{j,k}^2\theta_j\otimes\theta_j)\|_{C^{n-1,\alpha}}\Big)\\
&\hspace{-.3in}\lesssim_{\Theta_*} \sup_jN_k^{2}N_{k-1}e^{-2N_k^2t}\\
&\hspace{-.3in}\qquad\times\sum_{\xi\in\mathbb Z^3}(\langle\xi\rangle^{-5}+\langle\xi+2\frac{N_k}{M_k}\eta_j\rangle^{-5}+\langle\xi-2\frac{N_k}{M_k}\eta_j\rangle^{-5})(M_k|\xi|+N_{k-1})^{n-1+\alpha}\\
&\hspace{-.3in}\lesssim N_k^{1+n+\alpha}N_{k-1}e^{-2N_k^2t}
}
for all $n\geq1$ up to some fixed threshold that we can prescribe. Since $2-(1-\alpha)\gamma>1+\alpha$, we can increment $k\to k+1$ and combine these estimates into
\eq{\label{Rkhigh-final-estimate}
\|\grad^nR_{k+1}^{high}\|_{C^\alpha}&\lesssim N_{k+1}^{2+n}M_{k+1}^{-1+\alpha}N_{k}e^{-2N_{k+1}^2t}
}
for $n\geq0$ up to some threshold.

Collecting the remaining terms, we have \eqref{v_equation} with
\begin{equation}\begin{aligned}\label{full-f}
f&=\sum_{k=0}^{k_*}(E_{1,k}+E_{2,k}+R_{k+1}^{high}+v_k^e\otimes v_k^e+2v_k^e\odot v_k^p)\\
&\qquad+\sum_{k_1\neq k_2}v_{k_1}\otimes v_{k_2}+\mathcal R\div v_0^p\otimes v_0^p,
\end{aligned}\end{equation}
assigning $R_{k_*+1}^{high}\equiv0$. We claim $v_k^e$ is small; indeed, it has the exact form
\eqn{
v_k^e&=-C_ke^{-2N_{k+1}^2t}\mathbb P\Delta\psi_k-(\phi_k-\delta)*C_k\sum_ja_{j,k}(x)\Psi_{j,k}(x,t)\theta_j\\
&\qquad+C_kN_k^{-2}\phi_k*\sum_j\sum_{\substack{|\alpha|+|\beta|=2\\|\beta|<2}}c_{\alpha,\beta}\d^\alpha (a_{j,k}\varphi_j(M_kx))\d^\beta(\sin(N_k\eta_j\cdot x))\theta_je^{-N_k^2t}
}
for $k\in\{1,2,\ldots,k_*-1\}$,
\eqn{
v_{k_*}^e&=C_{k_*}N_{k_*}^{-2}\sum_j\sum_{\substack{|\alpha|+|\beta|=2\\|\beta|<2}}c_{\alpha,\beta}\d^\alpha (a_{j,{k_*}}\varphi_j(M_{k_*}x))\d^\beta(\sin(N_{k_*}\eta_j\cdot x))\theta_je^{-N_{k_*}^2t}
}
at $k=k_*$, and vanishes identically at $k=0$,
where $c_{\alpha,\beta}$ are fixed coefficients.

Using \eqref{C_formula}, \eqref{psi-bound-final}, \eqref{final-a-bounds}, and \eqref{mikado-pointwise-bounds}, we estimate
\eqn{
\|\grad^nv_k^e\|_\infty&\lesssim_{\Theta_*} N_k^{1+n}e^{-2N_{k+1}^2t}+(N_{k-1}+N_k^2\ell_k)N_k^{n}e^{-N_k^2t}.
}
In the case $k=k_*$, we also record the sharper estimate
\eq{\label{vk*e-bound}
\|v_{k_*}^e\|_\infty&\lesssim C_{k_*}N_{k_*}^{-1}M_{k_*}e^{-N_{k_*}^2t}.
}
It follows that
\eq{\label{error-interaction-bounds}
\|\grad^n(v_k^e\otimes v_k^e)\|_\infty+\|\grad^n(v_k^e\odot v_k^p)\|_\infty&\lesssim_{\Theta_*} N_k^{2+n}e^{-2N_{k+1}^2t}+N_k^{\frac94+n}N_{k+1}^{-\frac14}e^{-N_k^2t}.
}
Next, by \eqref{vk_bounds},
\eq{\label{mixed-interaction-bound}
\sum_{k_1\neq k_2}\|\grad^n(v_{k_1}\otimes v_{k_2})\|_{\infty}&\lesssim_{\Theta_*} \sum_{k'<k}N_{k'}N_{k}^{1+n}e^{-N_{k}^2t}\lesssim N_{k-1}N_k^{1+n}e^{-N_k^2t}.
}
Finally, observe that $v_0^p$ is a shear flow at each $t$ so $\div v_0^p\otimes v_0^p$ identically vanishes. Returning to \eqref{full-f}, we use \eqref{E1_est}, \eqref{E2_est}, \eqref{Rkhigh-final-estimate}, \eqref{error-interaction-bounds}, and \eqref{mixed-interaction-bound} to conclude
\eq{
\|\grad^n f\|_{C^\alpha}&\lesssim_{\Theta_*}\sum_{k=0}^{k_*}\Big(N_k^{n+\alpha}(N_kM_k\mathbf1_{k\geq1}+N_k^{\frac94}N_{k+1}^{-\frac14}+N_{k-1}N_k)e^{-N_k^2t}\nonumber\\
&\qquad+(\ell_kN_k^{1+n+\alpha}N_{k+1}^2\mathbf1_{k\geq1}+N_k^{2+n+\alpha}+N_{k+1}^{2+n+\alpha}N_kM_{k+1}^{-1})e^{-N_{k+1}^2t}\Big)\nonumber\\
&\lesssim \sum_{k=0}^{k_*}N_k^{\frac94+n+\alpha}N_{k+1}^{-\frac14}e^{-N_k^2t}+\sum_{k=1}^{k_*}\ell_kN_kN_{k+1}^{2+n+\alpha}e^{-N_{k+1}^2t}\nonumber\\
&\qquad+N_1^{2+n+\alpha}N_0M_1^{-1}e^{-N_1^2t},\label{grad-n-f-bound-preliminary-sum}
}
using the definitions of $N_k$ and $\ell_k$ to remove the dominated terms. Note that the extra term in the final line appears because the dominant term with decay rate $e^{-N_{k+1}^2t}$ is excluded when $k=0$; instead, in that case, the term with amplitude $N_{k+1}^{2+n+\alpha}N_kM_{k+1}^{-1}$ dominates. Note also that we have moved some $n+\alpha$ powers from $N_k$ onto $N_{k+1}$ which clearly can only increase the upper bound.

Next we observe that from the definitions of the frequency scales, for $\beta\in\mathbb R$ to be specified,
\eqn{
N_k^{\frac94+n+\alpha}N_{k+1}^{-\frac14}&= N_0^{\alpha+2\beta}N_k^{2+n-2\beta}\left(\frac{N_k}{N_0}\right)^{\alpha+2\beta}\left(\frac{N_k}{N_{k+1}}\right)^{1/4}\\
&\sim N_0^{\alpha+2\beta}N_k^{2+n-2\beta}A^{(b^k-1)(\alpha+2\beta-\frac{b-1}4)-\frac{b-1}4}.
}
Recall that $b>1$, so taking $\alpha,\beta>0$ small enough that $\alpha+2\beta<\frac{b-1}4$, we see that the power on $A$ is decreasing in $k$. Thus
\eqn{
N_k^{\frac94+n+\alpha}N_{k+1}^{-\frac14}&\lesssim N_0^{\alpha+2\beta}N_k^{2+n-2\beta}A^{-\frac{b-1}4}
}
for all $k\geq1$. Arguing in the same manner, we have
\eqn{
\ell_kN_kN_{k+1}^{2+n+\alpha}&\lesssim N_0^{\alpha+2\beta}N_{k+1}^{2+n-2\beta}A^{-\frac{b-1}4}
}
for all $k\geq1$, and
\eqn{
N_1^{2+n+\alpha}N_0M_1^{-1}e^{-N_1^2t}&\lesssim N_0^{\alpha+2\beta}N_1^{2+n-2\beta}A^{-(\gamma b-1)}.
}
Returning to \eqref{grad-n-f-bound-preliminary-sum} and taking $\beta>0$ smaller as needed (depending on $b$ and $\gamma$) we arrive at
\eqn{
\|\grad^nf\|_{C^\alpha}&\lesssim_{\Theta_*} A^{-\beta}N_0^{\alpha+2\beta}\sum_{k=0}^{k_*}N_k^{2+n-2\beta}e^{-N_k^2t}.
}
We fix $t>0$ and consider two regimes\footnote{Strictly speaking, there is a third regime when $t<N_{k_*}^{-2}$, but we do not require sharper estimates for such times.}: first, if $t\leq N_0^{-2}$, then the terms in the sum grow super-exponentially in $k$ while $N_0\leq N_k\leq t^{-\frac12}$, then decay even faster when $t^{-\frac12}<N_k\leq N_{k_*}$. Clearly the sum is bounded by its largest term. Because $N\mapsto N^{2+n-2\beta}e^{-N^2t}$ is maximized at $N=C(n,\beta)t^{-\frac12}$, the upper bound is $O(A^{-\beta}N_0^{\alpha+2\beta}t^{-1-\frac n2+\beta}$). Suppose instead $t>N_0^{-2}$. Then the summands strictly decay so we have the upper bound $O(A^{-\beta}N_0^{2+n+\alpha})e^{-N_0^2t}$. Combining these estimates, we conclude \eqref{final-f-bound}.
\end{proof}

Having obtained satisfactory estimates on the drift (Lemma~\ref{v-estimates-lemma}) and the force (Proposition~\ref{f-prop}), we are in a position to show the smallness of $w$ and conclude that the solution is global.

\begin{proposition}\label{w-prop}
    Let $v$, $w$, and $T_*$ be as in Definitions~\ref{v-psi-etc-definition} and \ref{full-definition}. For any $n_{max}'''\in\mathbb N$, $\epsilon>0$, and all sufficiently large $A=A(\epsilon,n_{max}''')$, we have $T_*=\infty$ and
    \eq{\label{final-w-bound}
    t^{\frac12(1+n)}\|\grad^nw(t)\|_{L^\infty}&\leq \epsilon (N_0^2t)^{\beta}\qquad\forall t\in(0,KN_0^{-2}]
    }
    for $n=0,1,\ldots,n_{max}'''$, with $\beta\in(0,\frac12)$ as in Proposition~\ref{f-prop} and a large $K>1$ to be specified.
\end{proposition}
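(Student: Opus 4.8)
\subsection*{Proof proposal}

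The plan is to treat $w=u-v$ perturbatively. Subtracting \eqref{v_equation} from \eqref{nse} and using $u\otimes u-v\otimes v=v\otimes w+w\otimes v+w\otimes w$, one sees that $w$ solves the perturbed system $\d_t w-\Delta w+\mathbb P\div(v\otimes w+w\otimes v+w\otimes w)=-\mathbb P\div f$ with $w(0)=0$. Passing to the mild formulation, $w$ is a fixed point of $w\mapsto \mathcal L_f+\mathcal B(v,w)+\mathcal B(w,v)+\mathcal B(w,w)$, where $\mathcal L_f(t)=-\int_0^t e^{(t-s)\Delta}\mathbb P\div f(s)\,ds$ and $\mathcal B(a,b)(t)=-\int_0^t e^{(t-s)\Delta}\mathbb P\div(a\otimes b)(s)\,ds$. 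I would set $T=KN_0^{-2}$ and run the estimate in the parabolic-weighted norm $\|w\|_{Y_T}=\max_{0\le n\le n_{max}'''}\sup_{0<t\le T}(N_0^2t)^{-\beta}t^{\frac12(1+n)}\|\grad^n w(t)\|_{L^\infty}$ (upgrading $L^\infty$ to $C^\alpha$ where needed, to absorb $\mathbb P\div$), via a continuity/bootstrap argument: let $T^\dagger\le\min(T_*,T)$ be maximal with $\|w\|_{Y_{T^\dagger}}\le\epsilon$, prove $\|w\|_{Y_{T^\dagger}}\le\epsilon/2$, and conclude $T^\dagger=T$ and $T_*>T$ by the standard local-existence criterion.

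The source term is where all smallness comes from. Using \eqref{final-f-bound}, I would decompose $f$ into its frequency-$N_k$ constituents and bound $\mathcal L_f$ scale by scale with the exponential heat damping $\|e^{(t-s)\Delta}\mathbb P\div P_{\sim N_k}g\|_\infty\lesssim N_ke^{-cN_k^2(t-s)}\|g\|_\infty$: the pieces of $f$ at scales $N_k$ with $k\ge1$ (which are large near $s=0$) are killed by this damping before time $\sim N_0^{-2}$, and only the scale-$N_0$ part survives, carrying the factor $A^{-\beta}(N_0^2s)^{\frac\alpha2+\beta}$. This yields $\|\mathcal L_f\|_{Y_T}\lesssim_{\Theta_*,\eta_*,n_{max}'''}A^{-\beta}$, which will be made $\le\epsilon/4$ by choosing $A$ large \emph{after} $\Theta_*,\eta_*,k_*,K,\epsilon,n_{max}'''$. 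The self-interaction $\mathcal B(w,w)$ obeys the routine bilinear bound $\|\mathcal B(w,w)\|_{Y_T}\lesssim_K\|w\|_{Y_T}^2$, using that $w\in Y_T$ is concentrated at frequencies $\lesssim t^{-1/2}$.

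The hard part is the drift $\mathcal B(v,w)+\mathcal B(w,v)$, since $v$ is \emph{critical}-sized, not small, so these are not contractions. I would split $v=v_0+v_{\ge1}$. For $v_{\ge1}=\sum_{k\ge1}v_k$: each $v_k$ has large amplitude $C_k$ but oscillates at $N_k$ and decays as $e^{-N_k^2s}$; the product $v_k\otimes w$ lives essentially at frequency $\sim N_k$ (the $N_k$-frequency part of $w$ is itself $O(A^{-\beta})$ since it is sourced only by the $N_k$-part of $f$), so the Duhamel integral produces $e^{-cN_k^2(t-s)}$ and, after summing over $k$, the dependence on the huge $N_{k_*}$ cancels, leaving an $A$-independent bound that is moreover negligible by time $\sim N_0^{-2}$. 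For $v_0$: it is the \emph{exact} shear flow $(1-e^{-2N_1^2t})\Theta_*\sin(x\cdot\eta_*)e^{-|\eta_*|^2t}$ with $\div v_0\equiv0$, so $\mathbb P\div(v_0\otimes w)=\mathbb P(v_0\cdot\grad w)$ and $\mathbb P\div(w\otimes v_0)=\mathbb P(w\cdot\grad v_0)+(\text{lower order})$; there is no frequency gain and the constant is $\sim|\Theta_*|/|\eta_*|$, but the operator $\mathcal A_0\coloneqq\mathcal B(v_0,\cdot)+\mathcal B(\cdot,v_0)$ is \emph{causal} with a time-integrable kernel, and the quantities $\int_0^T(t-s)^{-1/2}\|v(s)\|_\infty\,ds$ and $\int_0^T\|\grad v(s)\|_\infty\,ds$ are finite and independent of $A$ (the first $\lesssim(|\Theta_*|/|\eta_*|)(1+(N_0^2t)^{-1/2})$, the second $\lesssim_{\Theta_*}k_*$, using \eqref{vk_bounds} and the lacunarity of the $N_k$). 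Hence iterating $\mathcal A_0$ gains Gamma-function factors, so $\mathrm{Id}-\mathcal A_0$ is invertible on $Y_T$ with $A$-independent (though large, $\sim e^{C(\Theta_*,\eta_*,k_*,K)}$) norm; equivalently one closes via a fractional Gronwall inequality. Combining, $\|w\|_{Y_T}\le e^{C(\Theta_*,\eta_*,k_*,K)}\big(\|\mathcal L_f\|_{Y_T}+O(\|w\|_{Y_T}^2)\big)$, which for $A$ large enough is $\le\epsilon/2$; this closes the bootstrap, gives $T_*>KN_0^{-2}$, and yields \eqref{final-w-bound}.

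Finally, for $T_*=\infty$: at $t_0=KN_0^{-2}$ one has $v(t_0)=\Theta_*\sin(x\cdot\eta_*)e^{-|\eta_*|^2t_0}+(\text{error super-small in }A)$ and $w(t_0)$ small by \eqref{final-w-bound}, so $\|u(t_0)\|_{BMO^{-1}}\lesssim(|\Theta_*|/|\eta_*|)e^{-K}+\epsilon+o_A(1)$, which is below the Koch--Tataru threshold once $K$ is chosen large depending on $\Theta_*$; since $u(t_0)\in C^\infty\subset VMO^{-1}$, the small-data global well-posedness of \cite{koch2001well} extends $u$ to a global solution. The main obstacle throughout is the drift step: making the critical-sized $v$ behave like a bounded linear perturbation, by exploiting heat damping for the high modes $v_k$ ($k\ge1$) and the exact shear/divergence-free structure of $v_0$ together with the Volterra structure of Duhamel, so that all largeness is ultimately dominated by the $A^{-\beta}$ smallness of the force.
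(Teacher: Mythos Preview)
Your overall strategy---Duhamel for $w$, bootstrap in a parabolically weighted norm, smallness from $A^{-\beta}$ in the forcing beating an $A$-independent (but large) constant from the drift, then small-data global theory at $t\sim KN_0^{-2}$---is exactly the paper's. The execution differs in two places, and in both the paper is considerably simpler.

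\textbf{The drift term.} You split $v=v_0+v_{\ge1}$ and propose a frequency-localized argument for $v_{\ge1}$, claiming $v_k\otimes w$ ``lives essentially at frequency $\sim N_k$'' so the heat kernel produces $e^{-cN_k^2(t-s)}$. This step is underjustified: the low-frequency output of $v_k\otimes(P_{\sim N_k}w)$ is not damped, and your parenthetical that $P_{\sim N_k}w=O(A^{-\beta})$ is circular within the bootstrap (the hypothesis only gives $t^{1/2}\|w(t)\|_\infty\le\epsilon(N_0^2t)^\beta$, not frequency-localized smallness). One can repair this with a scale-by-scale bootstrap, but the paper avoids the issue entirely: it treats \emph{all} of $v$ as a single critical-sized drift and applies a Gr\"onwall inequality, the exponent being controlled by $\|s^{-1/2}v\|_{L^1_sL^\infty_x}+\|s^{1/2}v\|_{L^\infty_sL^\infty_x}+\|v\|_{L^2_sL^\infty_x}$, each of which is $O_{\Theta_*}(k_*)$ by \eqref{critical_v_bounds}. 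This yields \eqref{w-gronwall-result} with an $\exp(O_{\Theta_*,k_*,K}(1))$ loss, absorbed by taking $A$ large. No frequency decomposition of $v$ or $w$ is needed.

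\textbf{The source term.} You propose to decompose $f$ into its $N_k$-constituents and integrate each against the heat kernel. This reproduces work already packaged in \eqref{final-f-bound}: the paper simply inserts that bound into the Duhamel integral, getting $\int_0^t(t-t')^{-(1-\alpha)/2}\|f(t')\|_{C^\alpha}\,dt'\lesssim A^{-\beta}(N_0^2t)^{\alpha/2+\beta}t^{-1/2}$ directly.

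For $T_*=\infty$ the paper uses Kato's $L^3$ small-data theory rather than Koch--Tataru; either works. In short, your plan is sound but the split $v=v_0+v_{\ge1}$ and the scale-by-scale treatment of $f$ are unnecessary detours, and the frequency-localization step as written has a gap.
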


\begin{proof}
    By \eqref{v_equation}, $w$ obeys
    \eqn{
    \d_tw-\Delta w+\mathbb P\div (w\otimes w+2v\odot w)=-\mathbb P\div f\\
    w|_{t=0}=0
    }
    so we have the Duhamel formula
    \eq{\label{w-duhamel}
    w(t)=-\int_0^te^{(t-t')\Delta}\mathbb P\div (w\otimes w+2v\odot w+f)(t')dt'
    }
    leading to, by standard multiplier estimates,
    \eqn{
    \|w(t)\|_\infty&\lesssim \int_0^t\Big((t-t')^{-\frac12}(\|w(t')\|_\infty^2+\|v(t')\|_\infty\|w(t')\|_\infty)\\
    &\qquad+(t-t')^{-\frac{1-\alpha}2}\|f(t')\|_{C^\alpha}\Big)dt'.
    }
    For simplicity, rather than continuing to work globally in time, we fix a large $K>1$ and consider $t\in[0,KN_0^{-2}]$. (The choice of $K$ will be specified at the end of the proof depending only on $\eta_*$.) For such $t$, we apply \eqref{final-f-bound} to find
    \eqn{
    t^\frac12\|w(t)\|_\infty&\lesssim_{\Theta_*} \int_0^tt^\frac12(t')^{-\frac12}(t-t')^{-\frac12}(\|v(t')\|_\infty+\|w(t')\|_\infty)(t')^\frac12\|w(t')\|_\infty dt'\\
    &\qquad+A^{-\beta}(N_0^2t)^{\frac\alpha2+\beta}.
    }
    Consider the weight $t^\frac12(t')^{-\frac12}(t-t')^{-\frac12}$. In the intervals $t'\in[0,t/2]$ and $[t/2,t]$, we use the upper bounds $(t')^{-\frac12}$ and $(t-t')^{-\frac12}$ respectively (up to a constant multiple). Enlarging the intervals back to $[0,t]$ and introducing the function $g(t)=t^\frac12\|w(t)\|_\infty$, we have
    \eqn{
    g(t)\lesssim_{\Theta_*}A^{-\beta}(N_0^2t)^{\frac\alpha2+\beta}+\int_0^t((t')^{-\frac12}+(t-t')^{-\frac12})(\|v(t')\|_\infty+\|w(t')\|_\infty)g(t')dt'.
    }
    From here we can apply the Gr\"onwall inequality~\cite[Lemma~B.3]{coic-palasek} to obtain
    \begin{equation}\begin{aligned}\label{w-gronwall-result}
    &t^\frac12\|w(t)\|_\infty\lesssim_{\Theta_*} A^{-\beta}(N_0^2t)^{\frac\alpha2+\beta}\exp\Big(O_{\Theta_*}(\|s^{-\frac12}v\|_{L_s^1L_x^\infty}+\|s^{-\frac12}w\|_{L_s^1L_x^\infty}\\
    &\qquad+(\|s^\frac12v\|_{L_s^\infty L_x^\infty}+\|s^\frac12w\|_{L_s^\infty L_x^\infty})(\|w\|_{L_s^2L_x^\infty}^2+\|v\|_{L_s^2L_x^\infty}^2))\Big)
    \end{aligned}\end{equation}
    for every $t\in(0,\min(KN_0^{-2},T_*))$ where, for brevity, we have written $L_s^pL_x^\infty$ in place of $L^p([0,t];L^\infty(\Td))$. Heuristically, as long as $w$ is bounded in a scale-invariant sense up to time $t$, it can be made arbitrarily small by the choice of $A$, and the estimates can be propagated forward in time. We make this reasoning precise using a bootstrap argument.
    
    For a fixed $T_1\in(0,T_*)$, consider the hypothesis
    \eq{\label{bootstrap_hypothesis}
    \|w(t)\|_{\infty}\leq \epsilon N_0^{2\beta}t^{\beta-\frac12}\qquad\forall t\in(0,T_1].
    }
    Recall that $u$ is a local solution of \eqref{nse} with smooth data, so $\|u\|_{L_{t,x}^\infty([0,T_*/2]\times\Td)}<\infty$. From Definition~\ref{v-psi-etc-definition} and \eqref{vk_bounds}, the same is true of $v$; thus \eqref{bootstrap_hypothesis} is true for all $T_1$ sufficiently small (recalling $\beta<\frac12$). Furthermore, it is easy to see that the map $t\mapsto t^{\frac12-\beta}\|w(t)\|_{\infty}$ is continuous on $(0,T_*)$. Thus, to bootstrap \eqref{bootstrap_hypothesis} to the full interval $(0,\min(KN_0^{-2},T_*))$, it suffices to show that for each $T_1\in(0,\min(KN_0^{-2},T_*))$, the hypothesis \eqref{bootstrap_hypothesis} implies
    \eq{\label{stronger-hypothesis-bootstrap-claim}
   \|w(t)\|_{\infty}\leq \frac\epsilon2N_0^{2\beta} t^{\beta-\frac12}\qquad\forall t\in(0,T_1].
    }
    Indeed, \eqref{bootstrap_hypothesis} easily gives
    \eq{\label{bootstrap-consequences}
    \|w\|_{L^2([0,t];L^\infty)}\leq(2\beta)^{-\frac12} (N_0^2T_1)^\beta\epsilon,\qquad\|s^{-\frac12}w\|_{L^1([0,t];L^\infty)}\leq\beta^{-1}(N_0^2T_1)^\beta\epsilon
    }
    for all $t\in(0,T_1]$. Inserting \eqref{bootstrap-consequences} and \eqref{critical_v_bounds} into \eqref{w-gronwall-result}, we have
    \eqn{
    \|w(t)\|_{\infty}\lesssim_{\Theta_*,K}A^{-\beta}t^{-\frac12}(N_0^2t)^\beta \exp(O_{\Theta_*,k_*,K}(1))\qquad\forall t\in(0,T_1]
    }
    from which the claim \eqref{stronger-hypothesis-bootstrap-claim} follows upon taking $A$ sufficiently large depending on $\Theta_*$, $k_*$, $K$, and $\epsilon$.
    
    This completes the bootstrap argument; we conclude that \eqref{bootstrap_hypothesis} holds with $T_1=\min(KN_0^{-2},T_*)$. Because $v\in L_{t,x}^\infty(\Td\times[0,\infty))$, it follows from standard local theory that $T_*\geq KN_0^{-2}$. Then using \eqref{vk_bounds}, \eqref{bootstrap_hypothesis}, and H\"older's inequality, we compute
    \eqn{
    \|u(KN_0^{-2}/2)\|_{3}&\leq \|v(KN_0^{-2}/2)\|_{3}+\|w(KN_0^{-2}/2)\|_{3}\\
    &\lesssim \sum_{k=0}^{k_*}N_k\exp(-K(N_k/N_0)^2/2)+\epsilon K^{\beta-\frac12}N_0\\
    &\lesssim N_0(\exp(-K/2)+\epsilon K^{\beta-\frac12}),
    }
    using in the last inequality that the sum is dominated by its first term due to the rapid decay. Choosing $K$ large depending\footnote{A more dimensionally consistent approach in the case where $N_0$ is much larger than the minimum frequency on $\Td$ would involve localizing $v_0(x,t)$ to a domain of volume $\sim N_0^{-3}$, but this would unnecessarily complicate the argument.} on $N_0$, we can arrange that the $L^3$ norm is smaller than any absolute constant. Then Kato's mild solution theory applied at $t=KN_0^{-2}/2$ guarantees that $u$ is global, as claimed.

    The higher derivative estimates can be obtained by a bootstrap argument from the $n=0$ case; given the claim for some $n\geq0$, one can take $n$ derivatives of \eqref{w-duhamel} and apply the same Gr\"onwall argument to the quantity $t^{(n+1)/2}\|w(t)\|_{\infty}$. The ideas are standard and we omit the details.
\end{proof}

\subsection{Final estimates}\label{final-estimates-conclusion-subsection}

Now we conclude the proof.

\begin{proof}[Proof of Theorem~\ref{second-theorem}]
    
Let $v_k$, $\psi_k$, etc.\ be as in Definition~\ref{v-psi-etc-definition}. Let $\psi(\xi/N)$ be the symbol for the Fourier multiplier $P_N$. Toward estimating the norm of the data, we have
\eq{
P_N(a_{j,k_*}\Psi_{j,k_*})(x,0)&=\sum_{\xi\in\mathbb Z^3}\psi(\xi/N)\mathcal F_{y\to\xi}\Big(a_{j,k_*}\varphi_j(M_{k_*}y)\sin(N_{k_*}\eta_j\cdot y)\Big)(\xi)e^{ix\cdot\xi}\nonumber\\
&=\im\sum_{\xi\in\mathbb Z^3}\psi(\xi/N)\mathcal F\Big(a_{j,k_*}\varphi_j(M_{k_*}\cdot)\Big)(\xi-N_{k_*}\eta_j)e^{ix\cdot\xi}\label{lp-formula}
}
by Definition~\ref{v-psi-etc-definition} and some straightforward algebra. There are two regimes for the dyadic scale $N$. First, consider the case where we project to frequencies away from the primary support of $v_{k_*}$ (the only leakage coming from the rapidly decaying Fourier tail of $a_{j,k_*}(x)\varphi_j(M_{k_*}x)$). For $N\notin(N_{k_*}/2,2N_{k_*})$, we apply \eqref{lp-formula}, \eqref{mikado-pointwise-bounds}, and \eqref{final-a-bounds} to find
\eq{
\|P_N\mathbb P\Delta\psi_{k_*}(0)\|_\infty&\lesssim (N/N_{k_*})^2\sup_j\|P_N(a_{j,k_*}\Psi_{j,k_*})(0)\|_{\infty}\nonumber\\
&\leq (N/N_{k_*})^2\sum_{\frac 35N\leq|\xi|\leq\frac{19}{10}N}|\xi-N_{k_*}\eta_j|^{-4}\|\Delta^2(a_{j,k_*}\varphi_j(M_{k_*}x))\|_{\infty}\nonumber\\
&\lesssim (N/N_{k_*})^2\sum_{\frac 35N\leq|\xi|\leq\frac{19}{10}N}\max\{N_{k_*},|\xi|\}^{-4}M_{k_*}^4\nonumber\\
&\lesssim (M_{k_*}/N_{k_*})^4\min\left(\frac{N_{k_*}}{N},\frac{N}{N_{k_*}}\right)^{2}.\label{error-frequency-high-low-bound}
}
These error terms are small by the observation \eqref{M-over-N-fact-small}, and summable over the dyadics due to the $\min()$ factor.

What remains is the primary contribution to the norm where we project to frequency shells near $N_{k_*}$. Recall the decomposition $v_k=v_k^p+v_k^e$ from the proof of Proposition~\ref{f-prop}. For $N\in(N_{k_*}/2,2N_{k_*})$, we use \eqref{mikado-pointwise-bounds}, \eqref{vk*e-bound}, \eqref{a-precise-upper-lower-bounds-final}, \eqref{M-over-N-fact-small} to find
\eqn{
\|P_Nv_{k_*}(0)\|_\infty&\leq\|v_{k_*}^p(0)\|_\infty+\|v_{k_*}^e(0)\|_\infty\\
&\leq C_{k_*}\big(\sup_j\|a_{j,k_*}\|_\infty+O\Big(\frac{M_{k_*}}{N_{k_*}}\Big)\big)\\
&\leq 32000C_{k_*}.
}
Choosing $k_*$ sufficiently large depending on $\Theta_*$, we can arrange that
\eqn{
C_{k_*}=N_{k_*}\left(\frac{|\Theta_*|}{|\eta_*|}\right)^{2^{-k_*}}\leq \frac{101}{100}N_{k_*}
}
by \eqref{C_formula}. Thus
\eq{\label{mid-frequency-bound}
\|P_Nv_{k_*}(0)\|_\infty&\leq33000N_{k_*}\qquad \forall N\in(N_{k_*}/2,2N_{k_*}).
}

From Definitions~\ref{v-psi-etc-definition} and \ref{full-definition}, we have
\eqn{
u^0(x)&=v(x,0)=v_{k_*}(x,0)=C_{k_*}\mathbb P\Delta\psi_{k_*}(x,0).
}
Finally we can complete the estimate, recalling the definition of the norm from \S\ref{definitions-notation-section}. Using \eqref{error-frequency-high-low-bound} and \eqref{mid-frequency-bound}, we have
\eqn{
\|u^0\|_{B^{-1}_{\infty,1}}&\leq \sum_{N\in(N_{k_*}/2,2N_{k_*})}N^{-1}\|P_Nv_{k_*}(x,0)\|_\infty\\
&\qquad+\sum_{N\not\in(N_{k_*}/2,2N_{k_*})}N^{-1}C_{k_*}\|P_N\mathbb P\Delta\psi_{k_*}(x,0)\|_\infty\\
&\leq 33000N_{k_*}\left(\frac2{N_{k_*}}+\frac1{N_{k_*}}\right)+O\Big(\left(\frac{M_{k_*}}{N_{k_*}}\right)^4\sum_N\min\Big(\left(\frac{N_{k_*}}{N}\right)^3,\frac{N}{N_{k_*}}\Big)\Big)\\
&=99000+O(A^{-(1-\gamma)b^{k_*}}).
}
We emphasize that the sums are taken over dyadic integers in the indicated ranges. Clearly the second term can be made small by the choice of $A$, and we reach the claimed upper bound. 

Now we establish that $u$ becomes large as claimed. Following the definitions in Definition~\ref{v-psi-etc-definition}, \eqref{C_formula}, and Lemma~\ref{construct_coefficients_lemma},
\eqn{
v_0(x,t)&=C_0(1-e^{-2N_1^2t})\mathbb P\Delta\psi_0(x,t)\\
&=\frac{C_0a_{1,0}}{N_0^2}(1-e^{-2N_1^2t})\mathbb P\Delta\left(\sin(x\cdot\eta_*)\frac{\Theta_*}{|\Theta_*|}\right)e^{-|\eta_*|^2t}\\
&=(1-e^{-2N_1^2t})\Theta_*\sin(x\cdot\eta_*)e^{-|\eta_*|^2t}.
}
Defining $E$ to satisfy the statement of Theorem~\ref{second-theorem}, we have
\eqn{
E(t,x)&=-\Theta_*\sin(x\cdot\eta_*)e^{-(2N_1^2+|\eta_*|^2)t}+\sum_{k=1}^{k_*} v_k(x,t)+w(x,t).
}
By \eqref{vk_bounds} and \eqref{final-w-bound},
\eqn{
\|\grad^nE(t)\|_\infty&\lesssim_{\Theta_*,\eta_*,n}e^{-2N_1^2t}+\sum_{k=1}^{k_*} N_k^{1+n}e^{-N_k^2t}+\epsilon(N_0^2t)^\beta t^{-\frac12(1+n)}\\
&\lesssim_{\eta_*,n} e^{-(N_1/|\eta_*|)^2}+\sum_{k=1}^{k_*}N_k^{1+n}e^{-(N_k/|\eta_*|)^2/2}+\epsilon\\
&\lesssim_{\eta_*,n} N_1^{-10}+\epsilon
}
for $t\in[|\eta_*|^{-2},2|\eta_*|^{-2}]$, recalling that $N_0=\big\lceil|\eta_*|\big\rceil$, and that by choosing $A$ sufficiently large, we can arrange that $N_k\gg|\eta_*|$ for all $k\geq1$. Taking $A$ larger as necessary and $\epsilon>0$ smaller (as allowed by Proposition~\ref{w-prop}), we arrive at the claimed bound on $\grad^nE$.
\end{proof}

\section{Proof of Corollaries~\ref{a-priori-estimate-corollary} and \ref{kt-vmo-bmo-corollary}}\label{corollary-section}

Having completed the proof of Theorem~\ref{second-theorem}, we can easily prove the corollaries.

\begin{proof}[Proof of Corollary~\ref{a-priori-estimate-corollary}]

Toward a disproof of \eqref{theorem-apriori-bound-e1}, we apply Theorem~\ref{second-theorem} with $\Theta_*=ne_1$, $\eta_*=me_2$, $\epsilon_*=1$, and $n_{max}=0$ for each $n,m\in\mathbb N$. Thus we arrive at a sequence of smooth global solutions $u_{n,m}$ with $\|u_{n,m}^0\|_{B^{-1}_{\infty,1}}=O(1)$ and
\eq{\label{error}
\|u_{n,m}(x,m^{-2})-ne_1\sin(mx_2)e^{-1}\|_\infty\leq1.
}
Thanks to $\|u_{n,m}^0\|_{B^{-1}_{\infty,1}}\leq10^5$, we have $\|u_{n,m}^0\|_{BMO^{-1}}\leq r_0$ with $r_0$ an absolute constant coming from the embedding \eqref{besov-bmo-inclusion}.
Further,
\eqn{
\|u_{n,m}(m^{-2})\|_{BMO^{-1}}&\geq \frac{n}{e}\|\sin(mx_2)\|_{BMO^{-1}}\\
&\qquad-\|u_{n,m}(x,m^{-2})-ne_1\sin(mx_2)e^{-1}\|_{BMO^{-1}}.
}
The $BMO^{-1}$ norm is controlled by $B^{-1}_{\infty,2}$ which, in the context of zero-average functions on $\Td$, is clearly controlled by $L^\infty$. Note also that $\|\sin(mx_2)\|_{BMO^{-1}}\sim |m|^{-1}$ due to \eqref{besov-bmo-inclusion}. By \eqref{error}, we conclude
\eqn{
\|u_{n,m}(m^{-2})\|_{BMO^{-1}}\gtrsim \frac nm-1.
}
We contradict \eqref{theorem-apriori-bound-e1} for a particular $T$ and $f$ by first taking $m$ large depending on $T$, then $n$ large depending on $m$ and $f(r_0)$.

The proofs of the impossibility of \eqref{theorem-a-priori-bound-e2}--\eqref{theorem-apriori-bound-prodiserrin} are nearly identical so we omit the details.
\end{proof}

\begin{proof}[Proof of Corollary~\ref{kt-vmo-bmo-corollary}]
    Consider the same sequence of global solutions $u_{n,m}$ constructed in Corollary~\ref{a-priori-estimate-corollary}. Recall that the initial data belong to a ball $B_{BMO^{-1}}(0,r_0)$ with $r_0>0$ an absolute constant. Toward contradiction, suppose that for some $T,R>0$, the Picard method yields mild solutions in $B_{X_T}(0,R)$ from data $u_{n,m}^0$ for every $n,m\in\mathbb N$. By the local theory, those solutions agree with $u_{n,m}$ on $[0,T]$ (recalling the data is smooth). But, by the arguments in the proof of Corollary~\ref{a-priori-estimate-corollary}, we have
    \eqn{
    \|u_{n,m}(m^{-2})\|_\infty\gtrsim n-1
    }
   and therefore
    \eqn{
    \sup_{t\in[0,T]}t^\frac12\|u_{n,m}(t)\|_\infty \gtrsim \frac {n-1}m,
    }
    upon taking $m$ large depending on $T$. Note that this time-weighted supremum norm is one term in the definition of $X_T$. Thus, taking $n$ large depending on $m$ and $R$, we conclude $u_{n,m}\not\in B_{X_T}(0,R)$, a contradiction.
\end{proof}

\appendix

\section{Rank-one decomposition and Mikado flows}\label{mikado-appendix}

Because we are interested in computing an explicit and reasonable value for the size of the initial data, it becomes necessary to specify the objects in Lemma~\ref{nash-lemma} and Definition~\ref{mikado-definition} concretely.

\begin{proof}[Proof of Lemma~\ref{nash-lemma}]
    One can take $\theta_1=(\frac45,0,\frac35)$, $\theta_2=(0,-\frac35,\frac45)$, $\theta_3=(0,\frac45,\frac35)$, $\theta_4=(\frac35,0,-\frac45)$, $\theta_5=(\frac35,\frac45,0)$, and $\theta_6=(-\frac45,\frac35,0)$. Then \eqref{nash-lemma-span-equation} is reduced to the claim that $(\theta_j\otimes\theta_j)_{j=1}^6$ are linearly independent and generate $\Id$ with strictly positive coefficients. Indeed, the positivity must persist for all symmetric matrices $M$ in a neighborhood of $\Id$; thus we can define $\Gamma_j$ such that $\Gamma_j^2(M)$ are these coefficients.
    
    It is easy to see that $(\theta_j\otimes\theta_j)_j$ sum to $2\Id$, so indeed there is a neighborhood where the coefficients are close to $\frac12$ and therefore positive. To compute the size of the neighborhood, one can solve the six-dimensional linear system explicitly to reduce \eqref{nash-lemma-span-equation} to
    \eqn{
    \Gamma_j(\Id+\epsilon)^2=\frac12+\sum_{1\leq k\leq l\leq3}b_{jkl}\epsilon_{kl}
    }
    for some constants $b_{jkl}$ and all sufficiently small symmetric matrices $\epsilon$. We compute, for instance in the case $j=1$, that $b_{111}=\frac12$, $b_{112}=\frac7{24}$, $b_{113}=\frac{25}{24}$, $b_{122}=\frac12$, $b_{123}=\frac7{24}$, and $b_{133}=\frac12$. More generally, for every $j$, $(b_{jkl})_{kl}$ is a permutation of these same six values, up to signs. In particular, for every $j=1,2\ldots,6$, we have $\sum_{k,l}|b_{jkl}|=25/8$. Thus, when $\|\epsilon\|<1/7$, we have $|\Gamma_j^2-\frac12|\leq\frac{25}{56}$ (recalling that we use the entry-wise maximum norm on symmetric matrices). It follows that each $\Gamma_j^2$ avoids $0$ so the square root can be taken smoothly, and we conclude \eqref{gamma-bounds}.
\end{proof}

\begin{proof}[Proof of Lemma~\ref{mikado-delta-lemma}]

We define the positions
\begin{align*}
x_1&=\left(\frac{21}{100},\frac{26}{25},\frac{47}{50}\right),\,x_2=\left(\frac{37}{50},\frac{467}{100},\frac{357}{100}\right),\,x_3=\left(\frac{7}{5},\frac{126}{25},\frac{91}{100}\right),\\
x_4&=\left(\frac{393}{100},\frac{104}{25},\frac{341}{100}\right),\,x_5=\left(\frac{3}{4},\frac{617}{100},\frac{153}{25}\right),\,x_6=\left(\frac{261}{100},\frac{307}{50},\frac{339}{100}\right)\in\Td.
\end{align*}
For $j_1\neq j_2$, the distance between the periodic lines $\mathcal L_{j_1}=x_{j_1}+\mathbb R\theta_{j_1}$ and $\mathcal L_{j_2}=x_{j_2}+\mathbb R\theta_{j_2}\bmod2\pi\mathbb Z^3$ is given by
\eqn{
\dist_{\Td}(\mathcal L_{j_1},\mathcal L_{j_2})=\min_{m\in\mathbb Z^3}\left|(x_{j_1}-x_{j_2}+2\pi m)\cdot\frac{\theta_{j_1}\times\theta_{j_2}}{|\theta_{j_1}\times\theta_{j_2}|}\right|.
}
Given that $5\theta_j\in\mathbb Z^3$, in practice it suffices to minimize over only $m\in\{0,1,2,3,4\}^3$. Computing them explicitly, we find
\begin{align*}
 \dist_{\Td}(\mathcal L_{j_1},\mathcal L_{j_2})= 
\begin{pmatrix}[1.7]
 0 & \frac{2800 \pi -8487}{100 \sqrt{481}} & \frac{1569-400 \pi }{100 \sqrt{34}} & \frac{78}{25} & \frac{4000 \pi -12257}{100 \sqrt{481}} & \frac{30 \pi -89}{5 \sqrt{41}} \\
 * & 0 & \frac{33}{50} & \frac{2 (75 \pi -128)}{25 \sqrt{41}} & \frac{4079-1200 \pi }{100 \sqrt{481}} & \frac{3 (40 \pi -73)}{20 \sqrt{34}} \\
 * & * & 0 & \frac{8 (49-15 \pi )}{5 \sqrt{481}} & \frac{297-80 \pi }{20 \sqrt{41}} & \frac{1559-400 \pi }{100 \sqrt{481}} \\
 * & * & * & 0 & \frac{200 \pi -531}{50 \sqrt{34}} & \frac{783}{50 \sqrt{481}} \\
 * & * & * & * & 0 & \frac{273}{100} \\
 * & * & * & * & * & 0 \\
\end{pmatrix}
_{j_1,j_2}
\end{align*}
where we omit the lower triangle, which is symmetric. One finds that the smallest off-diagonal entry is $\frac{8 (49-15 \pi )}{5 \sqrt{481}}\approx 0.1368$.
    
\end{proof}

\bibliographystyle{abbrv}
\bibliography{references}

\end{document}